\documentclass[a4paper,12pt]{article}
\usepackage{amsthm}
\usepackage{amsthm, amsmath, amssymb}
\usepackage{geometry}
\usepackage{mathrsfs}
\usepackage{caption}
\usepackage{color}
\usepackage{graphicx}
\newtheorem{theorem}{Theorem}

\newtheorem{corollary}{Corollary}

\newtheorem{example}{Example}

\newtheorem{problem}{Problem}
\newtheorem{remark}{Remark}

\newtheorem{definition}{Definition}
\usepackage[linesnumbered,ruled,vlined]{algorithm2e}
\usepackage{algorithmic}
\renewcommand\refname{Reference}
\usepackage{authblk}
\usepackage{hyperref}

\newsavebox\affbox
\author[a]{\textbf{Weifang Lv}}
\author[a]{\textbf{Quanyu Tang}}
\author[a]{\textbf{Wei Wang}\thanks{Corresponding author. Email address: wang\_weiw@xjtu.edu.cn}}
\author[b]{\textbf{Hao Zhang}} 

\affil[a]{ School of Mathematics and Statistics, Xi'an Jiaotong University, Xi'an, 710049, P.R. China}
\affil[b]{ School of Mathematics, Hunan University, Changsha 410082, P.R. China}
\title{\textbf{\LARGE Nearly permanental cospectral graphs}}
\date{}
\begin{document}
\renewcommand\refname{Reference}
\maketitle

\begin{abstract}
 Let $G$ be a simple graph of order $n$ with adjacency matrix $A= (a_{ij})$. The \emph{determinant} and the \emph{permanen}t of the matrix $A$ are defined as
\[\mathrm{det}A= \sum_{\sigma \in S_n}\mathrm{sgn}(\sigma) \prod_{i=1}^n a_{i\sigma(i)}\quad\text{and}\quad\mathrm{per}A= \sum_{\sigma \in S_n} \prod_{i=1}^n a_{i\sigma(i)},\]respectively. The polynomials $\phi(G;x) =\mathrm{det}(xI-A(G))$ and $\pi(G;x) =\mathrm{per}(xI-A(G))$ are called the \emph{characteristic polynomial} and the \emph{permanental polynomial} of $G$, respectively. Two graphs are said to be \emph{nearly cospectral} with respect to the determinant (resp. permanent) if the difference of their characteristic (resp. permanental) polynomials is a constant. Lv et al. introduced the nearly cospectral graphs problem with respect to the determinant, and provided partial results in the case modulo 4. In this paper, we mainly prove that the corresponding results also hold for the nearly cospectral graphs problem with respect to the permanent. The determinant and permanent are the immanants corresponding to the irreducible characters $(1^n)$ and $(n)$ of the symmetric group $ S_n $, respectively. Here, the \emph{immanant} $d_\lambda(A)$ of $A$ is defined as
\[d_\lambda(A) = \sum_{\sigma \in S_n} \chi_\lambda(\sigma) \prod_{i=1}^n a_{i\sigma(i)},\] where $\chi_\lambda$ is the irreducible character of $ S_n $ indexed by the partition $ \lambda $. The immanantal polynomial of $G$ associated with $ \chi_\lambda $ is given by $ \phi_\lambda(G;x)=d_\lambda(xI-A) $. In this paper, we also establish a similar result for nearly immanantal cospectral graphs in $\mathbb{F}_2[x]$ for all irreducible characters $\chi_\lambda$.\\

\noindent\textbf{Keywords:}  Nearly cospectral graphs; Permanent; Immanant; Complement\\
\noindent\textbf{Mathematics Subject Classification:} 05C50, 15A15
   \end{abstract}

\section{Introduction}
Let $S_n$ be the symmetric group on $n$ objects. The \emph{determinant} of an $n \times n$ real matrix $A = (a_{ij})$ is defined as
\[
\mathrm{det}A = \sum_{\sigma\in S_n}\mathrm{sgn}(\sigma) \prod_{i=1}^{n} a_{i\sigma(i)}.
\]
The \emph{permanent} of $A = (a_{ij})$ is defined as
\[
\mathrm{per}(A) = \sum_{\sigma\in S_n} \prod_{i=1}^{n} a_{i\sigma(i)}.
\] Let $A(G)$ be the adjacency matrix of a graph $G$ with $n$ vertices. The polynomial
\[\phi(G, x) = \det(xI - A(G)) = \sum_{k=0}^{n} a_k x^{n-k}\]is called the \emph{characteristic polynomial} of $G$ and the polynomial
\[\pi(G, x) = \mathrm{per}(xI - A(G)) = \sum_{k=0}^{n} b_k x^{n-k}\]is called the \emph{permanental polynomial} of $G$. The characteristic polynomial and the permanental polynomial are important among the well-studied graph polynomials.

 Two graphs are said to be \emph{cospectral with respect to the determinant (resp. permanent)} if they have the same characteristic (resp. permanental) polynomial. Two graphs $G$ and $H$ are \emph{cospectral modulo $p$ with respect to the determinant (resp. permanent)} if the corresponding coefficients of their characteristic (resp. permanental) polynomials differ by multiples of $p$. The \emph{complement} of a graph $G$, is the graph $\bar{G}$, with the same vertex set but whose edge set consists of the edges not present in $G$.

Ji, Tang, Wang and Zhang presented an invariant for cospectral graphs with respect to the determinant as follows.
\begin{theorem}[\cite{cospectral}]\label{cospectral}
    Let $G$ and $H$ be graphs with $\phi(G;x)=\phi(H;x)$, then $\phi(\bar{G};x)\equiv \phi(\bar{H};x)\pmod4$.
\end{theorem}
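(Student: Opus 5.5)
The plan is to express $\phi(\bar G;x)$ through $\phi(G)$ and one extra walk-type quantity, and then show that this quantity is determined modulo $4$ by $\phi(G)$ alone. Write $A=A(G)$, $J$ for the all-ones matrix, $\mathbf 1$ for the all-ones vector, and $t=-x-1$. Since $A(\bar G)=J-I-A$, we have $xI-A(\bar G)=(x+1)I+A-J$. Hence
\[
\phi(\bar G;x)=(-1)^n\det\bigl(tI-A+J\bigr).
\]
Set $M=tI-A$ and $B=\operatorname{adj}(M)$. The rank-one determinant formula then gives
\[
\phi(\bar G;x)=(-1)^n\bigl(\phi(G;t)+\mathbf 1^{T}B\,\mathbf 1\bigr).
\]
So it suffices to prove that $\mathbf 1^{T}\operatorname{adj}(tI-A)\mathbf 1$, as a polynomial in $\mathbb Z[t]$, is determined modulo $4$ by $\phi(G;t)$.

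\emph{Splitting the quadratic form.} Since $B$ is symmetric,
\[
\mathbf 1^{T}B\mathbf 1=\operatorname{tr}B+2\sum_{i<j}B_{ij}.
\]
The diagonal part is harmless: $\operatorname{tr}\operatorname{adj}(tI-A)=\phi'(G;t)$, so it coincides for $G$ and $H$. What remains is to show that $S(G;t):=\sum_{i<j}B_{ij}$ is determined modulo $2$ by $\phi(G)$.

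For this I would work entirely over $\mathbb F_2[t]$. There, $A$ is symmetric with zero diagonal, i.e.\ alternating. I would relate $S$ to a determinant, using the bordered matrix
\[
N=\begin{pmatrix} M & \mathbf 1\\ \mathbf 1^T & 0\end{pmatrix},
\qquad \det N=-\mathbf 1^{T}B\mathbf 1 .
\]
Alternatively, I could use $\det(M+J)$ directly. I would expand these determinants by the permutation/Sachs method, grouping each permutation with its inverse. Permutations that are not involutions pair up and contribute $2\times(\text{something})$. Modulo $4$, only the parity of that something matters, and that parity is again a determinant/permanent-type count over $\mathbb F_2$.

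The claim I would aim for is that, modulo $2$, $S(G;t)$ is a polynomial expression in the coefficients of $\phi(G;t)$ reduced mod $2$. One natural candidate comes from the relation between $\operatorname{adj}(M)$ over $\mathbb F_2$ and $\det M$: for an alternating-plus-scalar matrix, $\det M\equiv \mathrm{Pf}$-type squares mod $2$, and similarly for the bordered matrix. The hope is that this yields an identity such as
\[
\det(M+J)\equiv \phi(G;t)+\bigl(\text{square of something determined by }\phi\bigr)\pmod 2 .
\]

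\emph{Main obstacle.} The hard step is exactly this parity of the off-diagonal sum of the adjugate. I would first try the involution pairing $\sigma\leftrightarrow\sigma^{-1}$ on $\det N$. The fixed points of the pairing are products of transpositions and fixed points, i.e.\ matchings in the bordered graph. The off-diagonal contributions then reduce modulo $2$ to counts that can be compared with the mod $2$ reduction of the coefficients of $\phi(G;t)$. The comparison uses that in characteristic $2$ the characteristic polynomial of the symmetric matrix $A$ satisfies $\phi(G;t)\equiv\det(tI+A)$, whose coefficients are governed by $\operatorname{tr}(A^k)$-type data. Once the parity of $S$ is shown to be a function of $\phi(G)$ mod $2$, the two displayed identities give $\phi(\bar G;x)\equiv\phi(\bar H;x)\pmod 4$ whenever $\phi(G;x)=\phi(H;x)$.
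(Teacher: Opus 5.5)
\textbf{Verdict: genuine gap.} The paper gives no proof of this statement. It is quoted from Ji--Tang--Wang--Zhang and is a special case of Spier's Theorem~\ref{mod4}. I am therefore judging your proposal on its own.

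Your reduction is correct. With $t=-x-1$ and $M=tI-A$ you get $\phi(\bar G;x)=(-1)^n\bigl(\phi(G;t)+\phi'(G;t)+2S(G;t)\bigr)$. So the theorem is equivalent to saying that $S(G;t)\bmod 2$ is determined by $\phi(G)$. But that is the entire content of the theorem, and your proposal does not prove it; the last two paragraphs describe hopes, not an argument. An equivalent form: since $\mathbf 1^T\operatorname{adj}(tI-A)\mathbf 1=\sum_{k=0}^{n-1}c_k(t)\,\mathbf 1^TA^k\mathbf 1$, with monic $c_k$ of degree $n-1-k$ built from the coefficients of $\phi$, the theorem says the walk counts $\mathbf 1^TA^k\mathbf 1$ are cospectral invariants modulo $4$.

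Moreover, the specific claim you aim for is false. You want $S(G;t)\bmod 2$ to be a function of $\phi(G;t)\bmod 2$, working entirely in $\mathbb F_2[t]$. Take $G=K_3$ and $H=K_2\cup K_1$.
\begin{itemize}
\item $\phi(G;x)=x^3-3x-2\equiv x^3-x=\phi(H;x)\pmod 2$.
\item The off-diagonal entries of $\operatorname{adj}(tI-A)$ are all $t+1$ for $K_3$, and are $t,0,0$ for $K_2\cup K_1$.
\item Hence $S(G;t)=3t+3\not\equiv t=S(H;t)\pmod 2$.
\item Consistently, $\phi(3K_1;x)=x^3\not\equiv x^3-2x=\phi(P_3;x)\pmod 4$.
\end{itemize}

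What separates these two graphs is the triangle, which is invisible mod $2$. By Sachs' theorem, $\phi\bmod 2$ only records the matching numbers $m(G,j)\bmod 2$. By contrast, $a_k\bmod 4$ also contains a term $2u_k(G)$, where $u_k$ counts Sachs subgraphs on $k$ vertices with exactly one cycle. Any correct argument must use this mod-$4$ part of $\phi$.

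Your appeal to trace data does not help either. For a graph, $\operatorname{tr}(A^k)$ is even for every $k\ge 1$, and Newton's identities need division by $k$, so in characteristic $2$ traces carry no information.

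The pairing $\sigma\leftrightarrow\sigma^{-1}$ in the bordered determinant is a sensible start: mod $4$, only involutions and permutations with exactly one cycle of length $\ge 3$ survive. The missing step is to express these surviving terms modulo $4$ through $\phi(G)$, in particular through the unicyclic counts $u_k$. These terms involve paths of $G$ closed up through the border vertex, and nothing in your proposal addresses them.
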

Spier presented an invariant for graphs that are cospectral modulo 4 with respect to the determinant, which strengthened Theorem \ref{cospectral} by Ji et al.
\begin{theorem}[\cite{Spier}]\label{mod4}
   For any graph $G$, given $\phi(G;x)\pmod4$, we can compute $\phi(\bar{G};x)\pmod4$.
\end{theorem}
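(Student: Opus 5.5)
The plan is to reduce the complement to a rank-one perturbation of $G$. Since $A(\bar G)=J-I-A(G)$, we have
\[
\phi(\bar G;x)=\det\bigl((x+1)I+A-J\bigr)=(-1)^n\det\bigl(M+J\bigr),\qquad M=(-x-1)I-A .
\]
Applying the matrix determinant lemma in its polynomial form $\det(M+J)=\det M+\mathbf 1^{T}\mathrm{adj}(M)\mathbf 1$ gives
\[
\phi(\bar G;x)=(-1)^n\Bigl(\phi(G;-x-1)+W_G(-x-1)\Bigr),\qquad W_G(t):=\mathbf 1^{T}\mathrm{adj}(tI-A)\mathbf 1 .
\]
The first term is obviously computable modulo $4$ from $\phi(G;x)\bmod 4$. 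So the whole statement reduces to showing that $W_G(t)\bmod 4$ is determined by $\phi(G;t)\bmod 4$.

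Next I would split $W_G$ using the symmetry of $\mathrm{adj}(tI-A)$:
\[
W_G(t)=\sum_i \mathrm{adj}(tI-A)_{ii}+2\sum_{i<j}\mathrm{adj}(tI-A)_{ij}=\phi'(G;t)+2\,S_G(t).
\]
The derivative $\phi'(G;t)\bmod 4$ is determined by $\phi\bmod 4$, since differentiation commutes with reduction modulo $4$. Hence it remains to show that $S_G(t)\bmod 2$, the sum over unordered pairs of off-diagonal cofactors, is determined by $\phi(G;t)\bmod 4$. This is the main obstacle: modulo $2$ alone one only recovers $W_G\equiv\phi'$, so the information about $S_G$ must genuinely come from the mod $4$ data of $\phi$.

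To attack $S_G\bmod 2$ I would use the Sachs-type expansion of the off-diagonal cofactors. For $i\neq j$, the entry $\mathrm{adj}(tI-A)_{ij}$ is a signed sum over configurations consisting of an $i$--$j$ path $P$ together with a spanning linear subgraph (disjoint edges and cycles) on the remaining vertices, weighted by powers of $t$. Modulo $2$ the signs disappear and cycles of length at least $3$ contribute in pairs (two orientations), so only the edges and the path survive.

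The coefficients of $\phi(G;t)$ admit the analogous Sachs expansion. Modulo $4$, each cycle of length at least $3$ contributes a factor $2$, so only configurations with at most one such cycle survive. The idea is to exhibit a parity correspondence between the two expansions. Each configuration (path $P$ plus matching) for $S_G$ is related to a configuration for $\phi$ in which $P$ is replaced by a structure of matching parity, and the leftover discrepancy is expressed through $\frac12(\text{coefficient of }\phi\text{ involving one cycle})\bmod 2$. A natural candidate identity to prove is
\[
S_G(t)\equiv F\bigl(\phi(G;t)\bigr)\pmod 2,
\]
where $F$ is an explicit combination of $\phi$, $\phi'$ and $\tfrac12(\phi^2-\phi(t^2)\text{-type terms})$. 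The last expression is an integral polynomial whose parity is controlled by $\phi\bmod 4$. To find and verify the exact form of $F$, I would check it on small graphs (paths, $K_3$, stars) and then prove it coefficientwise by the path-and-matching counting above.

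Once this identity for $S_G\bmod 2$ is established, substituting back into the formula for $\phi(\bar G;x)$ yields an explicit procedure that computes $\phi(\bar G;x)\bmod 4$ from $\phi(G;x)\bmod 4$, which is the assertion of Theorem~\ref{mod4}.
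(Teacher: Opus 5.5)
The paper does not prove this statement; it quotes it from Spier, so your proposal has to stand on its own. Your reduction is correct: $\phi(\bar G;x)=(-1)^n\bigl(\phi(G;-x-1)+\phi'(G;-x-1)+2S_G(-x-1)\bigr)$ is an exact identity. But because it is exact, and the first two terms are visibly determined by $\phi \bmod 4$, the claim that $S_G \bmod 2$ is determined by $\phi(G;t)\bmod 4$ is \emph{equivalent} to the theorem. (Another equivalent form: $\phi \bmod 4$ determines the walk counts $\mathbf 1^TA^k\mathbf 1 \bmod 4$.) So the reduction moves the entire content into the step you leave open, and for that step you give a plan, not an argument. The function $F$ is never specified. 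The parity correspondence between path-plus-matching configurations and Sachs configurations with at most one cycle is never defined. Note too that $\phi\bmod 4$ records matchings and one-cycle configurations only through the combinations $(-1)^j m(G,j)+2u_{2j}(G)$ and $u_{2j+1}(G)$, where $u_k$ is the number of $k$-vertex Sachs subgraphs with exactly one cycle. A counting argument therefore has to work with these combinations rather than with the individual counts.

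More seriously, the mechanism you sketch cannot work as described. It relates each $k$-vertex path-plus-matching configuration, which is what the coefficient of $t^{n-k}$ in $S_G$ counts mod $2$, to a $k$-vertex Sachs configuration of $G$. Take $G=2K_2$ and $H=P_3\cup K_1$. Then $\phi(G;t)=t^4-2t^2+1$ and $\phi(H;t)=t^4-2t^2$ agree in every coefficient except the constant term. Yet $S_G=2t^2-2\equiv 0$ while $S_H=2t^2+t\equiv t \pmod 2$, so the coefficient of $t$, which counts $3$-vertex configurations, changes. Correspondingly $\phi(\bar G)-\phi(\bar H)=\phi(C_4)-\phi(\mathrm{paw})=2t-1$. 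This is the phenomenon behind Theorem~\ref{A2}: for even $n$, an odd change in the constant term of $\phi\bmod 4$ forces $S_G\bmod 2$ to change in degree $n/2-1$. Hence the coefficient of $t^{n-k}$ in $S_G \bmod 2$ is not a function of the coefficients of $t^{n-j}$, $j\le k$, of $\phi\bmod 4$. Any valid $F$ must carry information from the constant term (the $n$-vertex configurations) down to the middle coefficients, for instance through products of coefficients, or through the square root $g$ in $\phi\equiv t^{n \bmod 2}g(t)^2 \pmod 2$. Finding and proving such an identity is exactly what is missing, so as it stands the proposal does not prove the theorem.
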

It is surprising that the characteristic polynomial and the permanental polynomial of a graph are closely connected modulo 4.
\begin{theorem}\label{det per}
    For any graph $G$, $\phi(G;x)\pmod4$ and $\pi(G;x)\pmod{4}$ can be derived from each other.
\end{theorem}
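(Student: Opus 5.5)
The plan is to prove the statement by giving an explicit coefficientwise relation between $\phi(G;x)$ and $\pi(G;x)$ modulo $4$. Write $\phi(G;x)=\sum_k a_kx^{n-k}$ and $\pi(G;x)=\sum_k b_kx^{n-k}$. I will use the Sachs-type expansions of both polynomials over \emph{elementary subgraphs}, i.e.\ subgraphs $H$ whose components are single edges ($K_2$) or cycles. For such an $H$ let $p(H)$ be its number of components and $c(H)$ its number of cycles.

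\textbf{Step 1: the two expansions.} Expanding $\det(xI-A)$ and $\mathrm{per}(xI-A)$ gives
\[
a_k=(-1)^k\sum_{|S|=k}\det A[S],\qquad b_k=(-1)^k\sum_{|S|=k}\mathrm{per}A[S],
\]
where $S$ runs over $k$-subsets of vertices and $A[S]$ is the principal submatrix. For a $0/1$ symmetric matrix with zero diagonal, the nonzero permutation terms correspond to spanning elementary subgraphs $H$ of $G[S]$. Each $H$ arises from $2^{c(H)}$ permutations, because each cycle can be oriented in two ways. Every permutation arising from $H$ has sign $(-1)^{k-p(H)}$. Hence
\[
a_k=\sum_{H}(-1)^{p(H)}2^{c(H)},\qquad b_k=(-1)^k\sum_H 2^{c(H)},
\]
where the sums run over elementary subgraphs $H$ of $G$ on exactly $k$ vertices. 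This step is routine.

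\textbf{Step 2: reduce modulo $4$.} Every term with $c(H)\ge 2$ vanishes mod $4$. The terms with $c(H)=0$ are the $k/2$-matchings, which exist only for $k$ even and have $p(H)=k/2$. Let $m_k$ be the number of $k/2$-matchings, with $m_k=0$ for odd $k$. The terms with $c(H)=1$ carry a factor $2$, so their signs do not matter mod $4$; let $C_k$ be the number of such subgraphs. Then
\[
a_k\equiv(-1)^{k/2}m_k+2C_k,\qquad b_k\equiv(-1)^k(m_k+2C_k)\pmod 4.
\]

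\textbf{Step 3: compare.} There are three cases.
\begin{itemize}
\item If $k\equiv0\pmod4$, then $a_k\equiv m_k+2C_k\equiv b_k$.
\item If $k\equiv2\pmod4$, then $a_k\equiv -m_k+2C_k$. Since $2C_k\equiv-2C_k \pmod 4$, this gives $a_k\equiv-(m_k+2C_k)\equiv -b_k$.
\item If $k$ is odd, then $m_k=0$, so $a_k\equiv 2C_k$ and $b_k\equiv -2C_k\equiv 2C_k$. Hence $b_k\equiv a_k\equiv -a_k$.
\end{itemize}
In all cases $b_k\equiv(-1)^{\binom{k}{2}}a_k\pmod 4$ (for odd $k$ either sign works). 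Thus each coefficient of $\pi(G;x)$ mod $4$ is $\pm$ the corresponding coefficient of $\phi(G;x)$ mod $4$, with the sign depending only on $k$. The map is invertible, which proves the theorem.

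The only point needing care is the sign bookkeeping in Step 1. This means checking that a permutation built from $H$ has sign $(-1)^{k-p(H)}$, and tracking the factor $(-1)^k$ that comes from expanding $xI-A$. Once those are fixed, the mod-$4$ collapse in Steps 2 and 3 is immediate.
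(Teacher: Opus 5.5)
Your proposal is correct and takes essentially the same route as the paper. Both arguments use the Sachs expansions of $a_k$ and $b_k$ and reduce each elementary subgraph's contribution modulo $4$ according to whether it has $0$, $1$, or at least $2$ cycles. The paper arrives at $a_k\equiv(-1)^{\lfloor k/2\rfloor}b_k \pmod 4$, which matches your $(-1)^{\binom{k}{2}}$ because $\binom{k}{2}\equiv\lfloor k/2\rfloor \pmod 2$. The only real difference is that you derive the Sachs formulas from the principal-minor expansion, whereas the paper cites them.
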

By means of the relationship between $\phi(G;x)\pmod4$ and $\pi(G;x)\pmod{4}$, we can extend the relevant results on the characteristic polynomial to the permanental polynomial. Combining Theorems~\ref{mod4} and \ref{det per} yields a result analogous to Theorem~\ref{mod4} for the permanental polynomial.

\begin{theorem}\label{per4}
    For any graph $G$, given $\pi(G;x)\pmod{4}$, we can compute $\pi(\bar{G};x)\pmod{4}$.
\end{theorem}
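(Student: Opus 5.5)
The plan is to compose three maps, all of which the paper has already made available. By Theorem~\ref{det per}, for any graph $G$ the residue $\pi(G;x)\pmod 4$ determines $\phi(G;x)\pmod 4$ and conversely. So write $\Phi$ for the map $\pi(G;x)\bmod 4\mapsto\phi(G;x)\bmod 4$ and $\Pi$ for the map going back. These maps must depend only on the polynomial residue itself, and not on any further information about $G$ such as $n$ beyond what the degree records. That is exactly what ``can be derived from each other'' asserts.

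Starting from $\pi(G;x)\bmod 4$, the steps are:
\begin{enumerate}
\item Apply $\Phi$ to obtain $\phi(G;x)\bmod 4$.
\item Apply Theorem~\ref{mod4} (Spier) to obtain $\phi(\bar G;x)\bmod 4$.
\item Since $\bar G$ is itself a graph, apply Theorem~\ref{det per} to $\bar G$. The map $\Pi$ then produces $\pi(\bar G;x)\bmod 4$ from $\phi(\bar G;x)\bmod 4$.
\end{enumerate}
Hence $\pi(\bar G;x)\bmod 4=\Pi\bigl(S(\Phi(\pi(G;x)\bmod 4))\bigr)$, where $S$ is Spier's map. This is a computable function of $\pi(G;x)\bmod 4$ alone.

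The only point needing care is the uniformity of the maps in Theorem~\ref{det per} and Theorem~\ref{mod4}. They must be genuine functions on residues, the same for all graphs of a given order, rather than graph-dependent procedures. Otherwise two graphs with equal $\pi\bmod 4$ might be assigned different complements. If Theorem~\ref{det per} is only available as an existence statement, I would make it explicit as a sanity check. Write both polynomials via Sachs-type expansions over elementary subgraphs, consisting of disjoint edges and cycles. A single edge contributes $-1$ in $\phi$ and $+1$ in $\pi$. A cycle contributes $\pm2$ in both, and $2\equiv-2\pmod 4$, so each coefficient differs modulo 4 only through the parity of the number of edge components. This gives an explicit correspondence of the form $b_k\equiv a_k$ or $b_k\equiv a_k+2c_k\pmod 4$. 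Here $c_k$ must itself be recoverable mod 2 from lower-order data, for example $b_k\equiv a_k \pmod 2$ and matching counts mod 2 read off from the residues. Establishing this recovery is the main obstacle, and it belongs to the proof of Theorem~\ref{det per}. Granting that theorem, the present statement is an immediate composition.
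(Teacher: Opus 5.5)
Your composition $\Pi\circ S\circ\Phi$ is exactly the paper's argument. The paper phrases it as well-definedness: $\pi(G;x)\equiv\pi(H;x)\pmod 4$ gives $\phi(G;x)\equiv\phi(H;x)\pmod 4$ by Theorem~\ref{det per}, then Theorem~\ref{mod4} passes to the complements, then Theorem~\ref{det per} is applied again. The obstacle you flag at the end does not arise: a cycle-free Sachs subgraph on $k$ vertices has exactly $k/2$ edges, so the paper obtains the uniform relation $a_k\equiv(-1)^{\lfloor k/2\rfloor}b_k\pmod 4$ with nothing further to recover.
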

 Theorem~\ref{per4} gives an invariant $\pi(\bar{G};x) \pmod{4}$ for graphs that are cospectral modulo 4 with respect to the permanent. Motivated by Theorem~\ref{mod4} and Theorem~\ref{per4} above, we introduce the (generalized) nearly cospectral graphs problem for the permanent, as a natural counterpart to the problem for the determinant~\cite{nearly}.
\begin{definition} \textup{Two graphs are said to be \emph{nearly cospectral with respect to the permanent} if the difference of their permanental polynomials is a constant, i.e., if $\pi(G;x)-\pi(H;x)=c$ holds for some constant $c$ (not necessarily zero). They are further termed \emph{generalized nearly cospectral with respect to the permanent} if they are nearly cospectral with respect to the permanent, and so are their complements, i.e., if $\pi(G;x)-\pi(H;x)=c$ and $\pi(\bar{G};x)-\pi(\bar{H};x)=d$ hold for some constants $c$ and $d$ (not necessarily zero).}
\end{definition}

So a natural question arises :

\begin{problem}\label{P}What can be said about the (generalized) nearly cospectral graphs with respect to the permanent?
\end{problem}
More precisely, we restrict ourselves to the following more modest questions:
 \begin{itemize}
 \item  If $\pi(G;x)$ and $\pi(H;x)$ differ by a constant, what's the relationship between $\pi(\bar{G};x)$ and $\pi(\bar{H};x)$?
     \item Moreover, if $\pi(G;x)$ and $\pi(H;x)$ differ by a constant, and $\pi(\bar{G};x)$ and $\pi(\bar{H};x)$ also differ by a constant, then what is the relationship between these two constants?
       \end{itemize}

For the determinant, Lv et al. gave the following results in the case of modulo 4~\cite{nearly}.
\begin{theorem}[\cite{nearly}]\label{A1}
  Let $G$ and $H$ be two graphs of odd order $n$. Suppose that $\phi(G;x)-\phi(H;x)\equiv c\pmod{4}$, where $c$ is a constant. Then $\phi(\bar{G};x)-\phi(\bar{H};x)\equiv c\pmod{4}$.
  \end{theorem}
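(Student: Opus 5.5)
The plan is to write $\phi(\bar G;x)$ explicitly in terms of data of $G$, and then see how that data changes when $\phi(G;x)$ is shifted by a constant modulo $4$. Since $A(\bar G)=J-I-A$, we have
\[
xI-A(\bar G)=(x+1)I+A-J=-\bigl(yI-A+J\bigr),\qquad y=-x-1 .
\]
By the rank-one update formula $\det(M+ee^{T})=\det M+e^{T}\mathrm{adj}(M)e$ with $M=yI-A$, this gives
\[
\phi(\bar G;x)=(-1)^n\Bigl(\phi(G;y)+e^{T}\mathrm{adj}(yI-A)\,e\Bigr).
\]
Because $\mathrm{adj}(yI-A)$ is symmetric and its diagonal entries are the principal cofactors, whose sum is $\phi'(G;y)$, we get
\[
e^{T}\mathrm{adj}(yI-A)e=\phi'(G;y)+2q_G(y),\qquad q_G(y)=\sum_{i<j}\mathrm{adj}(yI-A)_{ij}.
\]
Hence, modulo $4$,
\[
\phi(\bar G;x)\equiv(-1)^n\bigl(\phi(G;y)+\phi'(G;y)+2q_G(y)\bigr)\pmod 4 ,
\]
and only the parity of $q_G(y)$ matters.

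Now take $n$ odd and assume $\phi(G;x)-\phi(H;x)\equiv c\pmod 4$. The derivative terms then agree modulo $4$, and the $\phi$ terms differ by $c$. Therefore
\[
\phi(\bar G;x)-\phi(\bar H;x)\equiv -c-2\bigl(q_G(y)-q_H(y)\bigr)\pmod 4 .
\]
This is congruent to $c$ exactly when $q_G(y)-q_H(y)\equiv c\pmod 2$, coefficientwise as polynomials over $\mathbb{F}_2$. So the whole statement reduces to one claim: for odd $n$, the polynomial $q_G(y)\bmod 2$ is determined by $\phi(G;y)\bmod 2$, and a constant shift of $\phi \bmod 2$ by $c$ shifts $q_G$ by exactly $c$.

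By Theorem~\ref{mod4}, $\phi(\bar G)\bmod 4$, and hence by the display above $q_G\bmod 2$, is some function of $\phi(G)\bmod 4$. The real content is to identify that function well enough to see its behaviour under adding a constant. I would attack this directly over $\mathbb{F}_2$, writing $M=yI+A$ (symmetric, constant diagonal $y$). The idea is to relate $\sum_{i<j}\mathrm{adj}(M)_{ij}$ to determinants of $M$ and of the bordered matrix $\begin{pmatrix}M&e\\ e^{T}&0\end{pmatrix}$. Over characteristic $2$, symmetric matrices with zero diagonal have determinant equal to a square (a Pfaffian squared), and $n$ odd forces parity constraints. I expect these to yield an identity of the form
\[
q_G(y)\equiv \alpha\bigl(\phi(G;y)\bigr)+\beta\pmod 2,
\]
where $\alpha$ is additive, for instance a combination of $\phi$ and the "square-root" part of $\phi$ over $\mathbb{F}_2[y]$, and is the identity on constants. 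Concretely, I would split $\phi \bmod 2$ into even and odd parts and match them against the expansion of $\det(M+tJ)$ over $\mathbb F_2$, checking small odd $n$ (for example $K_1$, $K_3$, $P_3$) to pin down the exact form.

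This last step, obtaining the precise parity formula for the off-diagonal cofactor sum and verifying that it transports a constant $c$ to the same constant, is the main obstacle. The reduction before it is routine bookkeeping, with care needed for the sign $(-1)^n$ and the substitution $y=-x-1$. If the direct $\mathbb{F}_2$ computation becomes unwieldy, my fallback is to use Spier's explicit recipe behind Theorem~\ref{mod4} and check additivity under constant shifts from that formula.
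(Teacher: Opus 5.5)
Your reduction is correct. The identity $\phi(\bar G;x)=(-1)^n\bigl(\phi(G;y)+\phi'(G;y)+2q_G(y)\bigr)$ with $y=-x-1$ is exact, and for odd $n$ it makes the statement \emph{equivalent} to $q_G-q_H\equiv c\pmod 2$. But that equivalence is all the proposal establishes: the whole content of the theorem is in the step you leave open, and you have misidentified what that step is. For odd $n$ the constant coefficient $a_n=-\det A$ is even, since an alternating matrix of odd order is singular over $\mathbb{F}_2$. Hence $c\in\{0,2\}\pmod 4$ and $\phi(G)\equiv\phi(H)\pmod 2$, so there is no ``shift of $\phi \bmod 2$ by $c$'' to transport. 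What must be proved is $q_G\equiv q_H\pmod 2$. In other words, $q_G\bmod 2$ must not depend on $a_n\bmod 4$, which records the parity of the number of spanning Sachs subgraphs with exactly one cycle, once $a_1,\dots,a_{n-1}\bmod 4$ are fixed. This is a genuine parity phenomenon. By your identity and Theorem~\ref{A2}, for even $n\ge 4$ two graphs whose polynomials differ by an odd constant modulo $4$ always have $q_G\not\equiv q_H\pmod 2$.

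The route you sketch for the key step cannot succeed, because $q_G\bmod 2$ is not a function of $\phi(G)\bmod 2$. For $n=3$, $\overline{K_3}$ has $q=0$ and $P_3$ has $q=2y+1$, yet both have $\phi\equiv y^3\pmod 2$. Conceptually, Godsil's expansion $\mathrm{adj}(yI-A)_{ij}=\sum_{P:i\to j}\phi(G\setminus P;y)$ shows that $q_G\bmod 2$ counts pairs (path on at least two vertices, matching of the remaining vertices). For example, its $y^{n-3}$-coefficient is $\sum_v\binom{d_v}{2}=\binom{|E|}{2}-m(G,2)$, which depends on $|E|\bmod 4$ through $\binom{|E|}{2}$. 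So no identity $q_G\equiv\alpha(\phi(G;y))+\beta$ of the kind you expect can exist. Working in $\mathbb{F}_2[y]$ with $yI+A$, Pfaffians or square roots only sees $\phi\bmod 2$. Even allowing mod-$4$ input does not help: any additive map $(\mathbb{Z}/4)[y]\to\mathbb{F}_2[y]$ kills $2(\mathbb{Z}/4)[y]$, so it also only sees $\phi\bmod 2$.

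The missing ingredient is an explicit mod-$4$ computation. You need to express $q_G\bmod 2$ in terms of $\phi(G)\bmod 4$; equivalently, express $\phi(\bar G)\bmod 4$, or $\phi(G\vee K_1;y)=y\phi(G;y)-\phi'(G;y)-2q_G(y)$ modulo $4$. Then you must check that $a_n$ drops out when $n$ is odd. Your fallback of extracting this from Spier's recipe behind Theorem~\ref{mod4} is the right kind of input, but until it is carried out the statement is not proved. For the record, the paper does not prove this theorem itself; it quotes it from Lv et al.
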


\begin{theorem}[\cite{nearly}]\label{A2}
    Let $G$ and $H$ be two graphs of even order $n$. Suppose that $\phi(G;x)-\phi(H;x)\equiv c\pmod{4}$, where $c$ is a constant and $c\not\equiv0\pmod{4}$. Then $\phi(\bar{G};x)-\phi(\bar{H};x)\pmod{4}$ must be a polynomial of degree $\frac{n}{2}-1$.
\end{theorem}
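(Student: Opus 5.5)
The plan is to reduce everything to a complement formula that expresses $\phi(\bar G;x)$ through $G$ alone, and then to track how a constant perturbation of $\phi(G;x)$ propagates modulo $4$. Since $A(\bar G)=J-I-A(G)$, put $y=-x-1$. Then $xI-A(\bar G)=-\bigl((yI-A)+J\bigr)$, and the matrix determinant lemma gives
\[
\phi(\bar G;x)=(-1)^n\bigl(\phi(G;y)+w_G(y)\bigr),\qquad w_G(y)=\mathbf 1^{T}\operatorname{adj}(yI-A)\mathbf 1 .
\]
For $G,H$ with $\phi(G;x)-\phi(H;x)\equiv c \pmod 4$ this yields
\[
\phi(\bar G;x)-\phi(\bar H;x)\equiv (-1)^n\bigl(c+w_G(y)-w_H(y)\bigr)\pmod 4 .
\]
Because $y\mapsto -x-1$ is an invertible affine substitution, degrees are preserved. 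So the claim is equivalent to showing that $w_G(y)-w_H(y) \pmod 4$ has degree exactly $\tfrac n2-1$ when $n$ is even and $c\not\equiv0\pmod 4$.

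Next I split $w_G$ into diagonal and off-diagonal parts:
\[
w_G(y)=\sum_i \operatorname{adj}_{ii}+2\sum_{i<j}\operatorname{adj}_{ij}=\phi'(G;y)+2\,s_G(y),
\]
using the classical identity $\sum_i\phi(G-v_i;y)=\phi'(G;y)$. A constant difference in $\phi$ disappears on differentiation, so $\phi'(G;y)\equiv\phi'(H;y)\pmod 4$. Everything therefore reduces to computing $s_G(y)-s_H(y)\pmod 2$, i.e.\ the off-diagonal adjugate sum over $\mathbb F_2$. Spier's Theorem~\ref{mod4} guarantees that this quantity is determined by $\phi(G;x)\pmod 4$. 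However, we need its \emph{explicit} dependence, so I would reprove the relevant part of Spier's argument in a form that isolates that dependence.

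To evaluate $s_G \pmod 2$, I would expand $\operatorname{adj}_{ij}$ over permutations, i.e.\ over linear subgraphs containing a path from $i$ to $j$. Then I would pair each term with its reversal: orienting cycles and the path oppositely gives an involution. Modulo $2$, only the fixed configurations survive, namely those in which all cycles are $2$-cycles (matching edges), together with the path. Parallel to this, $\phi(G;y)\pmod 2$ collapses to $\sum_k m_k y^{n-2k}$, where $m_k$ counts $k$-matchings. For even $n$ this is a polynomial in $y^2$, hence the square of a polynomial $\psi_G(y)=\sum_k m_k y^{n/2-k}$ of degree $\tfrac n2$ over $\mathbb F_2$. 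My aim is to express $s_G\pmod 2$ through the matching data, presumably as a combination involving $\psi_G$ and the coefficient data of $\phi\pmod 4$ (the mod-$4$ information must enter, since $c\equiv2$ leaves $\phi\pmod 2$ unchanged).

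The main obstacle is this mod-$4$ bookkeeping, and it is where I would spend the effort. I must show that replacing the constant term by one differing by $c\not\equiv0 \pmod 4$ shifts $2s_G$ by $2$ times a polynomial whose top surviving term sits exactly in degree $\tfrac n2-1$. There are two cases to handle separately. If $c$ is odd, $\psi_G$ and $\psi_H$ differ in their constant term, i.e.\ in $m_{n/2}$. If $c\equiv2$, the difference lies in the second-order part of the constant coefficient (perfect matchings versus even cycle covers counted mod $4$). In each case I would check that the induced change in $s$ has degree precisely $\tfrac n2-1$, with leading coefficient $1$ in $\mathbb F_2$ coming from configurations where a single path replaces one matching edge. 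The added $c$ and $\phi'$ terms cannot cancel this. Finally I would translate back via $x=-y-1$, which preserves the degree.
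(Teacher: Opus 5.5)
Your reduction is sound. For even $n$ and $y=-x-1$ you get $\phi(\bar G;x)=\phi(G;y)+\phi'(G;y)+2s_G(y)$, the $\phi'$ terms cancel modulo $4$, and $s_G(y)\bmod 2$ is the parity generating function $\sum_{(P,M)}y^{\,n-|V(P)|-2|M|}$ over pairs of a path $P$ with at least two vertices and a matching $M$ of $G-V(P)$. But what is left after this reduction, namely that $s_G-s_H\bmod 2$ has degree exactly $\frac n2-1$, is the entire content of the theorem, and you do not prove it.

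Theorem~\ref{mod4} only tells you that $s_G\bmod 2$ is \emph{some} function of $\phi(G;x)\bmod 4$. It says nothing about how the constant coefficient enters, or about degrees. The mechanism you sketch for the leading term cannot work as described, for two reasons. First, $G$ and $H$ are unrelated graphs, so there is no configuration-by-configuration comparison between $s_G$ and $s_H$. Second, the parity of perfect matchings is a property of spanning configurations, while the coefficient of $y^{n/2-1}$ counts pairs $(P,M)$ covering only $\frac n2+1$ vertices. No local ``replace a matching edge by a path'' operation connects the two. The link can only come from global identities that express these parities through the coefficients of $\phi\bmod 4$.

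To see precisely what is missing, write $w_G(y)=\phi(G;y)\sum_{k\ge0}N_k(G)\,y^{-k-1}$, where $N_k$ counts walks of length $k$. Every $N_k$ with $k\ge1$ is even. The term $(\phi(G;y)-\phi(H;y))\sum_k N_k(G)y^{-k-1}\equiv c\sum_kN_k(G)y^{-k-1}$ has only negative powers. Hence $s_G-s_H\equiv\bigl[\phi(H;y)\sum_{k\ge1}\tfrac{N_k(G)-N_k(H)}{2}\,y^{-k-1}\bigr]_{\mathrm{poly}}\pmod 2$. Since $\phi(H;y)$ is monic of degree $n$, for $n\ge4$ the theorem is equivalent to the following: $N_k(G)\equiv N_k(H)\pmod 4$ for $1\le k<\frac n2$, and $N_{n/2}(G)\not\equiv N_{n/2}(H)\pmod 4$. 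That is a genuine refinement of Theorem~\ref{mod4}, and it is exactly the step your outline postpones with ``I would check''.

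A smaller point: for even $n$ the Sachs expansion gives $\det A\equiv(-1)^{n/2}K(G)^2\pmod 4$, so $c\equiv2\pmod 4$ cannot occur. The hypothesis says exactly that $K(G)\not\equiv K(H)\pmod 2$ while all other coefficients agree modulo $4$, and your separate case $c\equiv 2$ is vacuous.
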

\begin{theorem}[\cite{nearly}]\label{A3}
    Let $G$ and $H$ be two graphs of order $n\geq3$. Suppose that $\phi(G;x)-\phi(H;x)\equiv c\pmod{4}$ and $\phi(\bar{G};x)-\phi(\bar{H};x)\equiv d\pmod{4}$, where $c$ and $d$ are two constants, respectively. Then $c\equiv d\pmod{4}$. In particular, if $n$ is even, then $c\equiv d\equiv 0\pmod{4}$; and if $n$ is odd, then $c\equiv d\equiv 0~{\rm or}~2\pmod{4}$.
\end{theorem}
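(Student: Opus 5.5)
The plan is to reduce the statement to Theorems~\ref{A1} and~\ref{A2}, together with a parity argument on constant terms. Throughout I use that complementation is an involution ($\bar{\bar G}=G$), so the hypotheses are symmetric under swapping the pair $(G,H)$ with the pair $(\bar G,\bar H)$, and $(c,d)$ with $(d,c)$.

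\emph{Even $n$.} Suppose $c\not\equiv 0\pmod 4$. Theorem~\ref{A2} then says that $\phi(\bar G;x)-\phi(\bar H;x)\pmod 4$ is a polynomial of degree exactly $\frac n2-1$. Since $n\geq 3$ and $n$ is even, we have $n\geq 4$, so this degree is at least $1$. That contradicts the hypothesis that this difference is congruent to the constant $d$. Hence $c\equiv 0\pmod 4$. Applying the same argument to $\bar G,\bar H$, whose complements are $G,H$, gives $d\equiv 0\pmod 4$. In particular $c\equiv d$.

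\emph{Odd $n$.} Theorem~\ref{A1} applies directly and gives $\phi(\bar G;x)-\phi(\bar H;x)\equiv c\pmod 4$. Comparing with the hypothesis yields $d\equiv c\pmod 4$.

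It remains to show that $c$ is even when $n$ is odd. All non-constant coefficients cancel modulo $4$, so $c$ is congruent to the difference of constant terms:
\[
c\equiv \phi(G;0)-\phi(H;0)=(-1)^n\bigl(\det A(G)-\det A(H)\bigr)\pmod 4 .
\]
I claim that $\det A$ is even for every graph of odd order. Reduced modulo $2$, the adjacency matrix is symmetric with zero diagonal, so it is an alternating matrix over $\mathbb{F}_2$. An alternating matrix of odd order has determinant $0$, since its determinant is the square of a Pfaffian and the Pfaffian vanishes in odd dimension. Alternatively, one can pair each permutation with its inverse: modulo $2$ only involutions survive, and every involution of odd order has a fixed point, which contributes a zero diagonal entry. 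Thus both determinants are even, so $c$ is even, i.e.\ $c\equiv 0$ or $2\pmod 4$, and $d\equiv c$.

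The only delicate point is the parity lemma for odd order. Everything else is immediate from the earlier theorems, except for checking that $n\geq 3$ rules out the degenerate degree-$0$ case $n=2$ in Theorem~\ref{A2}.
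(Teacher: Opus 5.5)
Your proof is correct and follows the same route as the paper's proof of the permanental analogue, Theorem~\ref{B3}; Theorem~\ref{A3} itself is only cited in the paper. In both, Theorem~\ref{A1} settles odd $n$, and Theorem~\ref{A2} with $n\geq 4$ forces $c\equiv d\equiv 0\pmod 4$ for even $n$. The differences are minor: you obtain $d\equiv 0$ from the symmetry $\bar{\bar G}=G$ rather than from Theorem~\ref{mod4}, and you explicitly verify that $\det A$ is even for odd order (the determinant counterpart of Corollary~\ref{perA}), a step the paper leaves implicit.
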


In contrast to the (generalized) nearly cospectral graphs problem with respect to the determinant, the following results are direct consequences of Theorem~\ref{det per} and Theorems~\ref{A1}, \ref{A2}, \ref{A3} for the permanent.
\begin{theorem}\label{B1}
  Let $G$ and $H$ be two graphs of odd order $n$. Suppose that $\pi(G;x)-\pi(H;x)\equiv c\pmod{4}$, where $c$ is a constant. Then $\pi(\bar{G};x)-\pi(\bar{H};x)\equiv c\pmod{4}$.
  \end{theorem}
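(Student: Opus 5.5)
The plan is to make Theorem~\ref{det per} explicit at the level of coefficients. We show that $\pi(G;x)\bmod 4$ is obtained from $\phi(G;x)\bmod 4$ by multiplying each coefficient by a sign depending only on its index. Theorem~\ref{A1} can then be transported to the permanent.

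\textbf{Step 1 (coefficient formulas).} Write $\phi(G;x)=\sum_k a_kx^{n-k}$ and $\pi(G;x)=\sum_k b_kx^{n-k}$. Expanding $\det(xI-A)$ and $\mathrm{per}(xI-A)$ over permutations, the coefficient of $x^{n-k}$ is a sum over elementary subgraphs $U$ on $k$ vertices, i.e.\ disjoint unions of edges and cycles. Let $c(U)$ be the number of components of $U$ and $\mathrm{cyc}(U)$ the number of cycles of length at least $3$. Sachs' theorem gives
\[
a_k=\sum_U(-1)^{c(U)}2^{\mathrm{cyc}(U)} .
\]
For the permanent there are no permutation signs, only the factor $(-1)^k$ from the $k$ entries $-a_{ij}$, so
\[
b_k=(-1)^k\sum_U 2^{\mathrm{cyc}(U)} .
\]

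\textbf{Step 2 (reduction mod 4).} Terms with $\mathrm{cyc}(U)\ge 2$ vanish modulo $4$. Terms with $\mathrm{cyc}(U)=1$ contribute $\pm 2\equiv 2$ in both sums, giving $2N_k$ in each, where $N_k$ counts such $U$. The remaining terms are the perfect matchings of $k$ vertices, i.e.\ the $m_k$ matchings of size $k/2$ (only for $k$ even). They contribute $(-1)^{k/2}m_k$ to $a_k$ and $m_k$ to $b_k$.

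Hence $a_k\equiv b_k\pmod 4$ for $k$ odd and for $k\equiv 0\pmod 4$. For $k\equiv 2\pmod 4$ we get $b_k\equiv a_k+2m_k$. Since $m_k\equiv a_k\pmod 2$ there, this gives $b_k\equiv 3a_k\equiv -a_k\pmod 4$. Thus $b_k\equiv\varepsilon_k a_k\pmod 4$, where $\varepsilon_k=-1$ if $k\equiv2\pmod4$ and $\varepsilon_k=1$ otherwise. This is the explicit content of Theorem~\ref{det per}: a coefficientwise sign change that is its own inverse. I expect this step to be the main obstacle, specifically getting the signs right for the matching terms.

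\textbf{Step 3 (transfer).} Since $\varepsilon_k$ depends only on $k$ and $n$, the relation applies to $G,H,\bar G,\bar H$ alike. The hypothesis $\pi(G;x)-\pi(H;x)\equiv c$ says $b_k(G)\equiv b_k(H)$ for $k<n$ and $b_n(G)-b_n(H)\equiv c$. By Step~2 this becomes $a_k(G)\equiv a_k(H)$ for $k<n$ and $a_n(G)-a_n(H)\equiv\varepsilon_n c$. So $\phi(G;x)-\phi(H;x)\equiv \varepsilon_n c\pmod4$, and $\varepsilon_n=1$ because $n$ is odd.

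Theorem~\ref{A1} then yields $\phi(\bar G;x)-\phi(\bar H;x)\equiv c\pmod 4$. Applying the relation of Step~2 in reverse to $\bar G$ and $\bar H$ gives $\pi(\bar G;x)-\pi(\bar H;x)\equiv \varepsilon_n c=c\pmod4$, as claimed.
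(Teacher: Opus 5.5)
Your proof is correct and follows the paper's route: the paper likewise derives the coefficientwise congruence $a_k\equiv(-1)^{\lfloor k/2\rfloor}b_k\pmod 4$ from the two Sachs-type formulas, and then transports Theorem~\ref{A1} through it twice. Your sign $\varepsilon_k$ differs from $(-1)^{\lfloor k/2\rfloor}$ only when $k\equiv 3\pmod 4$, which is harmless because those coefficients are even. It also gives $\varepsilon_n=1$ for odd $n$, whereas the paper carries the factor $(-1)^{\lfloor n/2\rfloor}$ through and cancels it at the end.
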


\begin{theorem}\label{B2}
    Let $G$ and $H$ be two graphs of even order $n$. Suppose that $\pi(G;x)-\pi(H;x)\equiv c\pmod{4}$, where $c$ is a constant and $c\not\equiv0\pmod{4}$. Then $\pi(\bar{G};x)-\pi(\bar{H};x)\pmod{4}$ must be a polynomial of degree $\frac{n}{2}-1$.
\end{theorem}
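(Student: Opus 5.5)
Theorem~\ref{B2} follows from Theorem~\ref{A2} once we know exactly how $\phi(G;x)\pmod 4$ and $\pi(G;x)\pmod 4$ determine each other. Theorem~\ref{det per} only says they can be derived from each other, so I will first make the relation explicit. Write $\sigma\in S_n$ as a product of cycles. For $xI-A$, a permutation contributes a nonzero term only if its nontrivial cycles are edges of $G$ (transpositions) or cycles of $G$ of length $\ge 3$. The sign of such a term is $\mathrm{sgn}(\sigma)=(-1)^{\#\text{even cycles}}$. Each cycle of length $\ge 3$ comes paired with its reverse, so directed cycles of length $\ge3$ occur in pairs giving equal products. Hence the coefficient $a_k$ of $\phi$ and $b_k$ of $\pi$ are sums over elementary subgraphs on $k$ vertices: each matching edge contributes $-1$ in $\phi$ and $+1$ in $\pi$, and each cycle contributes a factor $2$ up to sign.

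The key step is to compare $a_k$ and $b_k$ modulo $4$ by this expansion. A term with two or more cycles is $\equiv 0\pmod 4$. A term with exactly one cycle is $\equiv 2\pmod 4$ regardless of sign. For the matching part, with $m$ edges the determinant sign is $(-1)^m$ (up to the uniform sign $(-1)^k$ coming from $-A$), whereas the permanent gives $(-1)^k$ only. The obstacle is the case of one cycle together with matching edges and the parity bookkeeping. I expect the outcome to be
\[
b_k \equiv (-1)^{k/2}\,a_k \pmod 4 \quad (k \text{ even}),\qquad b_k\equiv a_k \pmod 4\quad (k\text{ odd}).
\]
For odd $k$ every term has a cycle. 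I must check whether an odd cycle plus $m$ edges yields a sign difference that matters modulo $4$. Since such a term is $\pm2$ and $2\equiv-2\pmod 4$, it does not matter. For even $k$ with a cycle the term is again $\pm2$, so the sign is irrelevant. For pure matchings the ratio of signs is $(-1)^{k/2}$ uniformly. So we get the map
\[
T:\ \sum a_k x^{n-k}\mapsto \sum \epsilon_k a_k x^{n-k},\qquad \epsilon_k\in\{\pm1\},
\]
which is a coefficientwise sign change depending only on $n$ and $k$. It is $\mathbb{Z}$-linear and invertible modulo $4$, and $T(\phi(G;x))\equiv\pi(G;x)\pmod 4$ for every graph on $n$ vertices.

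With this in hand the proof is short. Suppose $\pi(G;x)-\pi(H;x)\equiv c\pmod 4$ with $c\not\equiv0$. Applying $T^{-1}$, the difference $\phi(G;x)-\phi(H;x)$ is congruent to $\epsilon_n^{-1}c=\pm c$, a constant that is still nonzero modulo $4$. Theorem~\ref{A2} then says $\phi(\bar G;x)-\phi(\bar H;x)\pmod 4$ has degree exactly $\frac n2-1$. Since $\bar G,\bar H$ also have order $n$, applying $T$ gives
\[
\pi(\bar G;x)-\pi(\bar H;x)\equiv T\bigl(\phi(\bar G;x)-\phi(\bar H;x)\bigr)\pmod 4.
\]
A coefficientwise sign change preserves which coefficients are nonzero modulo $4$, so the degree is still $\frac n2-1$. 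The only genuine work is verifying the explicit sign rule in the first step, in particular the claim that cycle-containing contributions are sign-insensitive modulo $4$.
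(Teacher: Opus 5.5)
Your proposal is correct and follows essentially the same route as the paper. The paper's proof of Theorem~\ref{det per} also derives the coefficientwise congruence $a_k\equiv(-1)^{\lfloor k/2\rfloor}b_k \pmod 4$ from the Sachs-type expansions; your choice $\epsilon_k=1$ for odd $k$ is equivalent, since those coefficients are even. The proof of this theorem then transfers the hypothesis to $\phi$, applies Theorem~\ref{A2}, and transfers back, using exactly your observation that a coefficientwise sign change preserves which coefficients vanish modulo $4$.
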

\begin{theorem}\label{B3}
    Let $G$ and $H$ be two graphs of order $n\geq3$. Suppose that $\pi(G;x)-\i(H;x)\equiv c\pmod{4}$ and $\pi(\bar{G};x)-\pi(\bar{H};x)\equiv d\pmod{4}$, where $c$ and $d$ are two constants, respectively. Then $c\equiv d\pmod{4}$. In particular, if $n$ is even, then $c\equiv d\equiv 0\pmod{4}$; and if $n$ is odd, then $c\equiv d\equiv 0~{\rm or}~2\pmod{4}$.
\end{theorem}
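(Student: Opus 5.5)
The plan is to make the equivalence in Theorem~\ref{det per} explicit, check that it acts coefficientwise with signs depending only on the index $k$, and then transfer Theorem~\ref{A3} through it. Write $\phi(G;x)=\sum_k a_k(G)x^{n-k}$ and $\pi(G;x)=\sum_k b_k(G)x^{n-k}$.

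\textbf{Step 1: the coefficient relation mod 4.} I will use the standard Sachs-type expansions over elementary subgraphs $U$ on $k$ vertices, i.e.\ spanning unions of disjoint edges and cycles on those $k$ vertices. Let $c(U)$ be the number of cycles of $U$ and $p(U)$ its number of components. Then
\[
a_k=\sum_U(-1)^{p(U)}2^{c(U)},\qquad b_k=(-1)^k\sum_U 2^{c(U)}.
\]
Modulo $4$, only subgraphs with $c(U)\le 1$ survive.

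Let $m_k$ be the number of matchings with $k/2$ edges (so $m_k=0$ for odd $k$). Let $C_k$ be the number of elementary subgraphs on $k$ vertices with exactly one cycle. Since $2\cdot(\pm1)\equiv 2\pmod 4$, the one-cycle terms contribute $2C_k$ to both $a_k$ and $b_k$ regardless of sign. A perfect matching on $k$ vertices has $p(U)=k/2$ components, so
\[
a_k\equiv(-1)^{k/2}m_k+2C_k,\qquad b_k\equiv(-1)^k m_k+2C_k \pmod 4 .
\]

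This gives three cases:
\begin{itemize}
\item For odd $k$, both sides reduce to $2C_k$, so $b_k\equiv a_k$.
\item For $k\equiv 0\pmod 4$, the two expressions coincide, so $b_k\equiv a_k$.
\item For $k\equiv 2\pmod 4$, we have $-a_k\equiv m_k-2C_k\equiv m_k+2C_k\equiv b_k$.
\end{itemize}
So $b_k\equiv \varepsilon_k a_k\pmod 4$, where $\varepsilon_k=-1$ if $k\equiv2\pmod4$ and $\varepsilon_k=1$ otherwise. This is the concrete form of Theorem~\ref{det per}, and I expect this bookkeeping to be the only real work.

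\textbf{Step 2: transfer.} Let $T$ be the $\mathbb{Z}_4$-linear involution on polynomials of degree at most $n$ that multiplies the coefficient of $x^{n-k}$ by $\varepsilon_k$. By Step 1, $\pi(G;x)\equiv T\phi(G;x)\pmod 4$ for every graph of order $n$, and this also holds for $\bar G$, $H$, $\bar H$, since they have the same order.

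By linearity,
\[
\pi(G;x)-\pi(H;x)\equiv T\bigl(\phi(G;x)-\phi(H;x)\bigr)\pmod 4 .
\]
The constant term corresponds to $k=n$. Hence $\pi(G;x)-\pi(H;x)\equiv c$ if and only if $\phi(G;x)-\phi(H;x)\equiv \varepsilon_n c\pmod 4$. Likewise $\pi(\bar G;x)-\pi(\bar H;x)\equiv d$ if and only if $\phi(\bar G;x)-\phi(\bar H;x)\equiv\varepsilon_n d\pmod 4$.

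\textbf{Step 3: conclude.} Theorem~\ref{A3} gives $\varepsilon_n c\equiv\varepsilon_n d\pmod 4$, hence $c\equiv d$. It also gives $\varepsilon_nc\equiv 0$ when $n$ is even, and $\varepsilon_n c\in\{0,2\}$ modulo $4$ when $n$ is odd. Since $0$ and $2$ are fixed by negation modulo $4$, the same statements hold for $c$ and $d$. This proves Theorem~\ref{B3}.
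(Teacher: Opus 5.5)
Your proof is correct, but it is organized differently from the paper's proof of Theorem~\ref{B3}.

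\textbf{Your route.} You transfer Theorem~\ref{A3} in a single step. The mod-$4$ relation between $a_k$ and $b_k$ is a coefficientwise sign change that depends only on $k$, and all four graphs share the same order $n$. So the relation sends a constant residue $c$ to the constant $\varepsilon_n c$, and sends non-constant residues to non-constant ones. Hence the hypotheses and all conclusions of Theorem~\ref{A3} carry over directly. This is the derivation the paper's introduction alludes to when it calls Theorem~\ref{B3} a direct consequence of Theorems~\ref{det per} and~\ref{A3}.

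\textbf{The paper's route.} The formal proof in Section~3 instead re-runs the argument at the permanent level. For odd $n$ it appeals to Theorem~\ref{B1}. For even $n$ it splits on whether $c\equiv 0\pmod 4$. In that case it uses Theorem~\ref{per4}. Otherwise it uses Theorem~\ref{B2}, together with the fact that a polynomial of degree $\frac{n}{2}-1\ge 1$ cannot equal a constant $d$.

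\textbf{Comparison.} Your version is shorter and uses only Theorem~\ref{A3} as a black box. It also yields the clause ``$c\equiv d\equiv 0$ or $2$'' for odd $n$ explicitly. The paper's one-line appeal to Theorem~\ref{B1} gives only $c\equiv d$, leaving the evenness of $c$ implicit; that evenness comes from Corollary~\ref{perA}, since $b_n$ is even for odd $n$. The paper's route, in exchange, shows Theorem~\ref{B3} as a consequence of the permanental results it has just proved.

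\textbf{Minor remark.} Your sign $\varepsilon_k$ differs from the paper's $(-1)^{\lfloor k/2\rfloor}$ when $k\equiv 3\pmod 4$. Both are valid there, because $a_k$ and $b_k$ are even for odd $k$. You could therefore cite the congruence $a_k\equiv(-1)^{\lfloor k/2\rfloor}b_k\pmod 4$ from the proof of Theorem~\ref{det per} rather than redoing your Step~1.
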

Actually, the determinant and permanent are the immanants corresponding to the irreducible characters $(1^n)$ and $(n)$ of the symmetric group $ S_n $, respectively. Here, the immanant $d_\lambda(A)$ of $A$ is defined as
\[d_\lambda(A) = \sum_{\sigma \in S_n} \chi_\lambda(\sigma) \prod_{i=1}^n a_{i\sigma(i)},\] where $\chi_\lambda$ is the irreducible character of $ S_n $ indexed by the partition $ \lambda $. The immanantal polynomial of $G$ associated with $ \chi_\lambda $ is given by $ \phi_\lambda(G;x)=d_\lambda(xI-A) $. Two graphs are said to be \emph{nearly immanantal cospectral with respect to $ \chi_\lambda$} if the difference of their immanantal polynomials associated with $ \chi_\lambda $ is a constant, i.e., if $\phi_\lambda(G;x)-\phi_\lambda(H;x)=c$ holds for some constant $c$ (not necessarily zero).

Unlike the cases in Theorem~\ref{mod4}, Theorem~\ref{per4}, and Theorems~\ref{A1}--\ref{B3}, such properties do not hold for the immanantal polynomial $ \phi_\lambda(G;x)$ of $G$ with respect to general character $ \chi_\lambda $; see Section~\ref{ex}. However, similar results hold in $\mathbb{F}_2[x]$.
\begin{theorem}\label{mod2}
     For any graph $G$, given $\phi_\lambda(G;x)\pmod{2}$, we can compute $\phi_\lambda(\bar{G};x)\pmod{2}$.
\end{theorem}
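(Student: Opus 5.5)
The plan is to reduce both $\phi_\lambda(G;x)$ and $\phi_\lambda(\bar G;x)$ modulo $2$ to matching counts, and then use an inclusion--exclusion relation between matchings of $G$ and of $\bar G$.

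\emph{Step 1: only involutions survive modulo $2$.} Expand
\[\phi_\lambda(G;x)=\sum_{\sigma\in S_n}\chi_\lambda(\sigma)\prod_{i}(xI-A)_{i\sigma(i)}.\]
Modulo $2$ the signs coming from $-A$ disappear. A term is nonzero only if every non-fixed point $i$ of $\sigma$ satisfies $i\sim\sigma(i)$ in $G$, and then the term equals $\chi_\lambda(\sigma)\,x^{\#\mathrm{fix}(\sigma)}$.

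Pair $\sigma$ with $\sigma^{-1}$. These two permutations have the same support and use the same edges. Since they are conjugate, $\chi_\lambda(\sigma)=\chi_\lambda(\sigma^{-1})$. Hence all terms with $\sigma\ne\sigma^{-1}$ cancel in pairs modulo $2$, and only involutions remain. An involution whose $2$-cycles are all edges of $G$ is exactly a matching of $G$. Writing $m_k(G)$ for the number of $k$-matchings, this gives
\[\phi_\lambda(G;x)\equiv\sum_{k\ge 0}\chi_\lambda(2^k1^{n-2k})\,m_k(G)\,x^{n-2k}\pmod 2 .\]

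\emph{Step 2: the character values are all congruent to $f^\lambda=\chi_\lambda(1^n)$ modulo $2$.} If $\sigma$ is an involution, the eigenvalues of $\sigma$ in the representation $V_\lambda$ are all $\pm1$. Since $-1\equiv 1 \pmod 2$, the trace $\chi_\lambda(\sigma)$ is congruent to $\dim V_\lambda=f^\lambda$ modulo $2$. Therefore
\[\phi_\lambda(G;x)\equiv f^\lambda\sum_k m_k(G)\,x^{n-2k}\pmod 2,\]
and the same identity holds for $\bar G$. If $f^\lambda$ is even, both polynomials vanish in $\mathbb{F}_2[x]$ and there is nothing to prove. If $f^\lambda$ is odd, then knowing $\phi_\lambda(G;x)\bmod 2$ is the same as knowing every $m_k(G)\bmod 2$.

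\emph{Step 3: recover $m_k(\bar G)\bmod 2$ from the $m_j(G)\bmod 2$.} Count $k$-matchings of $K_n$ and apply inclusion--exclusion over the set of edges that lie in $G$:
\[m_k(\bar G)=\sum_{j=0}^{k}(-1)^j m_j(G)\,M(n-2j,\,k-j),\]
where $M(m,r)=\binom{m}{2r}(2r-1)!!$ is the number of $r$-matchings of $K_m$. Because $(2r-1)!!$ is odd, this gives
\[m_k(\bar G)\equiv\sum_{j}\binom{n-2j}{2k-2j}m_j(G)\pmod 2.\]
This determines $\phi_\lambda(\bar G;x)\bmod 2$ from $\phi_\lambda(G;x)\bmod 2$.

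The main obstacle I anticipated was Step 2. A priori, some character values $\chi_\lambda(2^k1^{n-2k})$ could be even while others are odd, and then some $m_k(G)\bmod 2$ would be lost. The eigenvalue/trace parity observation removes this difficulty uniformly for every $\lambda$.
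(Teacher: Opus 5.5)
Your proof is correct. Its skeleton is the paper's: reduce $\phi_\lambda(G;x)$ modulo $2$ to $\sum_j \chi_\lambda(2^j1^{n-2j})\,m(G,j)\,x^{n-2j}$, then transfer matching counts from $G$ to $\bar G$. The execution differs in three ways.

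First, the paper obtains the reduction from the Yu--Qu Sachs-type formula (Theorem~\ref{Sachscharacter}), where every Sachs subgraph containing a cycle carries a factor $2^{c}$. Your pairing $\sigma\leftrightarrow\sigma^{-1}$ is a self-contained proof of the same fact: the factor $2^{c}$ comes from reversing cycles independently, and reversing all of them at once already suffices modulo $2$.

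Second, you derive the complement identity of Corollary~\ref{matchn} by inclusion--exclusion rather than citing Godsil's theorem, and then reduce it to binomial coefficients.

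Third, and most importantly, your Step~2 has no counterpart in the paper. In the last line of its congruence for $c_{\lambda,2j}(\bar G)$, the paper recovers $m(G,i)$ modulo $2$ as $c_{\lambda,2i}(G)/\chi_\lambda(2^i1^{n-2i})$ without checking that the divisor is odd. It need not be. For the second immanant $\lambda=(2,1^{n-2})$ with $n$ odd, $f^\lambda=n-1$ is even and the value on a transposition is $-(n-3)$, also even, so that division is $0/0$ in $\mathbb{F}_2$. Your observation that $\chi_\lambda(\sigma)\equiv f^\lambda\pmod 2$ for every involution $\sigma$ is exactly what justifies that step. Either $f^\lambda$ is even and both polynomials vanish in $\mathbb{F}_2[x]$, or every $\chi_\lambda(2^i1^{n-2i})$ is odd and the division is legitimate.

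So the paper's route buys continuity with the Sachs-coefficient machinery it uses throughout. Yours buys a self-contained argument that also closes a genuine gap in the paper's proof.
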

\begin{theorem}\label{C1}
  Let $G$ and $H$ be two graphs of order $n$. Suppose that $\phi_\lambda(G;x)-\phi_\lambda(H;x)\equiv c\pmod{2}$, where $c$ is a constant. Then $\phi_\lambda(\bar{G};x)-\phi_\lambda(\bar{H};x)\equiv c\pmod{2}$.
  \end{theorem}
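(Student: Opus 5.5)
The plan is to reduce everything modulo $2$ to a statement about matchings, and then use linearity in the matching numbers together with Theorem~\ref{mod2}.

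\textbf{Step 1: reduction to matchings.} Over $\mathbb{F}_2$ we have $xI-A\equiv xI+A$. Expanding $d_\lambda(xI+A)$, a permutation $\sigma$ with fixed-point set $F$ contributes $\chi_\lambda(\sigma)\,x^{|F|}\prod_{i\notin F}a_{i\sigma(i)}$. This product is nonzero only if every nontrivial cycle of $\sigma$ is a cycle of $G$, where a $2$-cycle means an edge. Now pair each $\sigma$ that has a cycle of length $\ge 3$ with the permutation $\sigma'$ obtained by reversing a canonically chosen such cycle, say the one containing the smallest vertex lying on a cycle of length $\ge3$. This is a fixed-point-free involution on those permutations. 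It preserves the cycle type, hence the value of $\chi_\lambda$, and it preserves the product of matrix entries because $A$ is symmetric. So these terms cancel in pairs mod $2$, and only permutations made of fixed points and $2$-cycles survive. Writing $m_k(G)$ for the number of $k$-matchings of $G$ and $c_k=\chi_\lambda(2^k1^{n-2k})$, this gives
\[
\phi_\lambda(G;x)\equiv \sum_{k\ge 0} c_k\, m_k(G)\, x^{n-2k}\pmod 2 .
\]

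\textbf{Step 2: matchings of the complement.} By inclusion--exclusion on the edges of $G$ used by a $k$-matching of $K_n$, we have $m_k(\bar G)=\sum_j (-1)^j m_j(G)\,N(n-2j,k-j)$. Here $N(r,s)$ is the number of $s$-matchings of $K_r$, and $m_0(G)=1$. Hence, mod $2$,
\[
c_k\, m_k(\bar G)\equiv \sum_{j} c_k\, N(n-2j,k-j)\, m_j(G).
\]
This is an affine expression in the $m_j(G)$. Theorem~\ref{mod2} says that $\phi_\lambda(\bar G;x)\bmod 2$ is determined by the data $c_j m_j(G)\bmod 2$. I will use it in the following linear form: whenever $c_j\equiv0$ and $j\ge1$, the coefficient $c_kN(n-2j,k-j)$ must vanish mod $2$ for every $k$. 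If needed, I will re-derive this from the proof of Theorem~\ref{mod2} rather than from its bare statement. This extraction, namely that the computation in Theorem~\ref{mod2} is given by such an affine map with the unknown $m_j$ entering with even coefficients, is the step I expect to be the main obstacle.

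\textbf{Step 3: conclusion.} Put $\delta_j=m_j(G)-m_j(H)$. The hypothesis says $c_j\delta_j\equiv0$ for every $j$ with $n-2j>0$. It also says that the constant term equals $c_{n/2}\delta_{n/2}\equiv c$ when $n$ is even, while $c\equiv0$ when $n$ is odd. Subtracting the formulas of Step~2 for $\bar G$ and $\bar H$, the constant $j=0$ contribution cancels. Each term with $j\ge1$ and $c_j\equiv1$, $n-2j>0$, vanishes because $\delta_j\equiv0$. Each term with $c_j\equiv0$ vanishes by the linear form of Theorem~\ref{mod2}. The only possible survivor is $j=n/2$ for $n$ even, which contributes only to $k=n/2$, with coefficient $N(0,0)=1$. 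That contribution is $c_{n/2}\delta_{n/2}\equiv c$, sitting in the constant term. Therefore $\phi_\lambda(\bar G;x)-\phi_\lambda(\bar H;x)\equiv c\pmod 2$, and for odd $n$ both sides are $0$.
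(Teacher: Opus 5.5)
Your Steps 1 and 3 are sound and follow the paper's route. You reduce mod $2$ to $\phi_\lambda(G;x)\equiv\sum_k\chi_\lambda((2^k))\,m(G,k)\,x^{n-2k}$; your cancellation argument replaces the paper's appeal to the Yu--Qu Sachs-type theorem and Corollary~\ref{odd k}. You then express $m(\bar G,k)$ through the $m(G,j)$ as in Corollary~\ref{matchn} and compare coefficients.

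\textbf{The gap is the step you flagged, and neither of your two proposed sources fills it.}

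\emph{The bare statement of Theorem~\ref{mod2}.} It only constrains the parity vectors $(m(G,i)\bmod 2)_i$ that are actually realized by graphs. To extract your coefficient identity from it, you would have to construct, for each even $c_j$, graphs whose matching parities differ in coordinate $j$ while agreeing in the coordinates with $c_i$ odd. You do not do this.

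\emph{The paper's proof of Theorem~\ref{mod2}.} Eq.~\eqref{G barG mod2} divides by $\chi_\lambda((2^i))$ modulo $2$. It therefore tacitly assumes these values are odd and says nothing about the case $c_j\equiv 0$ that you need.

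\emph{The affine formula alone.} Your linear form is not a formal consequence of it. For $n=4$, $j=1$, $k=2$ it demands $c_2N(2,1)=c_2\equiv 0$ whenever $c_1\equiv 0$. That is a genuine constraint on the character values and has to be proved.

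\textbf{The missing idea is that all your $c_k$ have the same parity.} A permutation $\sigma$ of cycle type $(2^k1^{n-2k})$ is an involution. So for the representation $\rho_\lambda$ affording $\chi_\lambda$, the matrix $\rho_\lambda(\sigma)$ satisfies $\rho_\lambda(\sigma)^2=I$. It is therefore diagonalizable with eigenvalues $\pm1$. If $b$ is the multiplicity of $-1$, then
\[
\chi_\lambda(\sigma)=\chi_\lambda((1^n))-2b\equiv\chi_\lambda((1^n))\pmod 2 .
\]
This splits into two cases:
\begin{itemize}
\item If every $c_k$ is odd, the situation ``$c_j\equiv 0$, $j\geq 1$'' never arises. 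Your Step 3 then goes through verbatim, and this is also what makes the paper's division legitimate.
\item If every $c_k$ is even, Step 1 gives $\phi_\lambda(G;x)\equiv 0\pmod 2$ for every graph $G$. This forces $c\equiv 0$ and makes the conclusion trivial. (This is the situation in Example~\ref{ex1}, where $\chi_{(2,1^{7})}((1^9))=8$.)
\end{itemize}
With this lemma inserted, your argument is complete and coincides with the paper's. It also makes explicit a point the paper leaves implicit.
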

The paper is organized as follows. Section~\ref{S2} presents some preliminary results. Sections~\ref{S3} and \ref{S4} are devoted to the proofs of the main theorems of this paper. In the final section, we present some numerical examples.

\section{Preliminaries}\label{S2}
In this section, we present some necessary definitions, notations and preliminary results that will be used throughout the paper.
\subsection{Sachs' coefficient theorems}
 Let $ A(G) $ be the adjacency matrix of a graph $G$ with $ n $ vertices. The characteristic polynomial and the permanental polynomial of $ G $ are defined as
\[
\phi_{(1^n)}(G;x) = \det(xI - A(G)) = \sum_{k=0}^n a_k x^{n-k},
\]and\[
\phi_{(n)}(G;x) =\mathrm{per}(xI - A(G)) = \sum_{k=0}^n b_k x^{n-k},
\]respectively.

A \emph{Sachs graph} is a graph in which each component is a single edge or a cycle.  A Sachs subgraph of $G$ with $ k $ vertices is denoted by $ H_k $. A well-known theorem which relates the coefficients of $\phi_{(1^n)}(G;x)$ to the structural properties of graphs, which is called \emph{Sachs' Coefficient Theorem}~\cite{S1}. For the permanental polynomial $\phi_{(n)}(G;x)$ of a graph, there is a Sachs type result which was obtained by Merris et al.~\cite{S2}. Then
\begin{equation}\label{Sachs1}
    a_k = \sum_{H_k} (-1)^{p(H_k)} 2^{c(H_k)} \quad (1 \leq k \leq n),
\end{equation}
and
\begin{equation}\label{Sachs2}
b_k = (-1)^k \sum_{H_k} 2^{c(H_k)} \quad (1 \leq k \leq n),
\end{equation}where the summations range over all Sachs subgraphs $ H_k $ of $ G $, $ p(H_k) $ is the number of components of $ H_k $ and $ c(H_k) $ is the number of cycles of $ H_k $~\cite{S1,S2}.

Yu and Qu extended these two results to the immanantal polynomial~\cite{S3}.
\begin{theorem}[\cite{S3}]\label{Sachscharacter}
  Let $ G $ be a graph on $ n $ vertices and $ A(G) $ be its adjacency matrix. Let \[
\phi_{\lambda}(G;x) = \sum_{k=0}^{n}c_{\lambda,k}(G)x^{n-k}
\]be the immanantal polynomial of $ G $. Then $c_{\lambda,0}(G)=\chi_{\lambda}((1^n))$ and for $k\geq1$,
\begin{equation}\label{Sachs3}
    c_{\lambda,k}(G) = (-1)^k \sum_{B:|B|=k, B \subseteq V(G)} \sum_C \chi_{\lambda}(C) 2^{c(T_C(B))},
\end{equation}where $ B $ is a subset of $ V(G) $ with $ k $ elements, $ C $ is a conjugacy class of $ (S_n)_B $, and $(S_n)_B$ is the subgroup of $S_n$ defined as $(S_n)_B= \lbrace\sigma\in S_n\mid\sigma(i) = i\rbrace$ for any $i\in\lbrace 1,2,\dots,n\rbrace\backslash B$, $ T_C(B) $ is a graph whose components are single edges or cycles with vertex set $ B $ and which is determined by $ \sigma \in C $ and $ c(T_C(B)) $ is the number of cycles in $ T_C(B) $.
\end{theorem}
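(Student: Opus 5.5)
The plan is to expand the immanant $d_\lambda(xI-A)$ directly from its definition, in the same way one proves the classical Sachs theorem for $\det$ and $\mathrm{per}$, and then to group permutations according to the Sachs subgraph they trace out. Write $M=xI-A$, so that $m_{ij}=x\delta_{ij}-a_{ij}$. Then
\[
\phi_\lambda(G;x)=\sum_{\sigma\in S_n}\chi_\lambda(\sigma)\prod_{i=1}^n\bigl(x\delta_{i\sigma(i)}-a_{i\sigma(i)}\bigr).
\]
For each $\sigma$, I will expand the product by choosing, for every index $i$, either the term $x\delta_{i\sigma(i)}$ or the term $-a_{i\sigma(i)}$. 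Let $B$ be the set of indices at which $-a_{i\sigma(i)}$ is chosen, and put $k=|B|$.

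The term $x\delta_{i\sigma(i)}$ is nonzero only when $\sigma(i)=i$, so a nonzero contribution forces $\sigma$ to fix every point of $\{1,\dots,n\}\setminus B$, i.e.\ $\sigma\in(S_n)_B$. Conversely, every $\sigma\in(S_n)_B$ arises in this way for the set $B$. Interchanging the order of summation therefore gives
\[
c_{\lambda,k}(G)=(-1)^k\sum_{|B|=k}\ \sum_{\sigma\in(S_n)_B}\chi_\lambda(\sigma)\prod_{i\in B}a_{i\sigma(i)}.
\]
For $k=0$ the only admissible permutation is the identity, and we obtain $c_{\lambda,0}=\chi_\lambda((1^n))$.

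Next, since $a_{ii}=0$, the product $\prod_{i\in B}a_{i\sigma(i)}$ is nonzero (and then equal to $1$) exactly when three conditions hold:
\begin{itemize}
\item $\sigma$ has no fixed point in $B$;
\item $i\sim\sigma(i)$ in $G$ for every $i\in B$;
\item consequently, every $2$-cycle of $\sigma$ on $B$ is an edge of $G$, and every cycle of length $\ge3$ is a directed cycle of $G$.
\end{itemize}
Thus each surviving $\sigma$ determines a spanning Sachs subgraph $T$ of $G[B]$, whose components are these edges and cycles. Conversely, a given Sachs subgraph $T$ on $B$ is produced by exactly $2^{c(T)}$ permutations, one for each choice of orientation of each of its cycles; edges admit only one transposition.

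All these permutations have the same cycle type. Their cycle type in $S_n$ is the type of $T$ (its component sizes) together with $n-k$ fixed points, so they all lie in one conjugacy class $C$ of $(S_n)_B$ and in the corresponding class of $S_n$. Since $\chi_\lambda$ is a class function, they all carry the common value $\chi_\lambda(C)$. Grouping the inner sum first by conjugacy class $C$ and then by the Sachs graph $T=T_C(B)$ realizing it yields exactly formula (\ref{Sachs3}).

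The main obstacle is not any computation but making the bookkeeping precise. First, the conjugacy classes of $(S_n)_B\cong S_k$ must be matched with those of $S_n$, so that $\chi_\lambda(C)$ is well defined; this works because two elements of $(S_n)_B$ are conjugate in $(S_n)_B$ iff they have the same cycle type on $B$, iff they are conjugate in $S_n$. Second, the phrase ``$T_C(B)$ determined by $\sigma\in C$'' has to be read as summing over all Sachs subgraphs of $G$ on $B$ whose component structure has type $C$, counting each with weight $2^{c(T)}$. As a check, specializing to $\chi_{(1^n)}=\mathrm{sgn}$ should recover (\ref{Sachs1}): a Sachs graph with $p$ components on $k$ vertices has sign $(-1)^{k-p}$, and combined with the prefactor $(-1)^k$ this gives $(-1)^{p}$. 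Specializing to the trivial character should recover (\ref{Sachs2}).
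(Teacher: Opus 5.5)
The paper gives no proof of this statement; it is quoted from Yu and Qu~\cite{S3}, so there is no internal proof to compare against. Your argument is the standard Sachs-type proof and it is correct: you expand $d_\lambda(xI-A)$ entrywise, note that the surviving terms come from $\sigma\in(S_n)_B$ that are fixed-point-free on $B$, and group these by the spanning Sachs subgraph $T$ of $G[B]$ they trace out (each $T$ arising from exactly $2^{c(T)}$ permutations of one cycle type). Your reading of $\sum_C\chi_\lambda(C)2^{c(T_C(B))}$ as a sum over all such $T$, weighted by $\chi_\lambda$ of their type, is also the one the paper relies on (classes with no realizing $T$ contribute $0$, and several $T$ of the same type are each counted), for instance when it derives $c_{\lambda,2j}(G)\equiv\chi_\lambda((2^j))\,m(G,j)\pmod 2$ in the proof of Theorem~\ref{mod2}.
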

\begin{corollary}\label{odd k}
    For any graph $G$, $c_{\lambda,k}(G)$ is even for every odd $k$.
\end{corollary}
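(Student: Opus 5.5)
We have to show that $c_{\lambda,k}(G)$ is even whenever $k$ is odd, and we will read this off the formula (\ref{Sachs3}) of Theorem~\ref{Sachscharacter}. Fix an odd $k\ge 1$. Every summand in (\ref{Sachs3}) is indexed by a $k$-subset $B\subseteq V(G)$ and a class $C$, and it has the form $\chi_\lambda(C)\,2^{c(T_C(B))}$. The factor $\chi_\lambda(C)$ is an integer, because characters of the symmetric group are integer-valued. It therefore suffices to show that $c(T_C(B))\ge 1$ for every term that actually contributes; then each term is even, and so is the sum.

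The key step is a parity count. Each contributing term comes from a spanning subgraph $T_C(B)$ on the vertex set $B$, and every component of it is either a single edge (2 vertices) or a cycle. If $T_C(B)$ had no cycles, it would be a perfect matching of $B$, which would force $|B|=k$ to be even. Since $k$ is odd, at least one component must be a cycle, so $c(T_C(B))\ge 1$. Every nonzero term is then divisible by 2.

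It helps to say this in permutation language too. A permutation $\sigma\in(S_n)_B$ contributes a nonzero product $\prod a_{i\sigma(i)}$ over $B$ only if $\sigma$ has no fixed points in $B$, because $G$ has no loops. Its cycles on $B$ then correspond to edges (2-cycles) and graph cycles (length $\ge 3$). When $|B|$ is odd, $\sigma$ cannot consist only of 2-cycles.

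The only delicate point is interpretive. We must confirm that in Theorem~\ref{Sachscharacter} the inner sum runs only over those $\sigma$ (equivalently, those realizable $T_C(B)$) that give Sachs subgraphs of $G$, so that terms with fixed points or non-edges vanish. We also need the integrality of $\chi_\lambda$. Granting both, the argument is immediate.

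As a check, the case $\lambda=(n)$ recovers the standard fact from (\ref{Sachs2}) that $b_k$ is even for odd $k$, and $\lambda=(1^n)$ recovers the corresponding fact for $a_k$ from (\ref{Sachs1}).
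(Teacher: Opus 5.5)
Your proof is correct and follows the paper's argument. In Eq.~\eqref{Sachs3}, only Sachs subgraphs of $G$ on the odd-sized set $B$ contribute, and each such subgraph must contain a cycle, so every nonzero term carries a factor $2^{c(T_C(B))}$ with $c(T_C(B))\ge 1$. Your explicit remarks on the integrality of $\chi_\lambda$ and on fixed points only spell out steps the paper leaves implicit; the fixed-point remark also covers the case $k=1$, which the paper treats separately.
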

\begin{proof}
    Suppose that $k$ is odd. By Theorem~\ref{Sachscharacter}, only Sachs subgraphs of $G$ with $k$ vertices contribute to $c_{\lambda,k}(G)$. If $k=1$, then $c_{\lambda,k}(G)=0$. If $k\geq3$, then every such Sachs subgraph contains at least one cycle. Otherwise, all components in such a Sachs subgraph are edges. This implies that the Sachs subgraph has an even number of vertices, which contradicts the fact that $k$ is odd. Therefore, it follows from Eq.~\eqref{Sachs3} that $c_{\lambda,k}(G)$ is even.
\end{proof}
\subsection{Permanent and perfect matchings}
Let $G$ be a graph with $n$ vertices. An \emph{$r$-matching} in $G$ is a set of $r$ edges, no two of which have a vertex in common. A \emph{perfect matching} in $G$ is a $\frac{n}{2}$-matching. There is a close relationship between the permanent of the adjacency matrix and the number of perfect matchings in the graph.
\begin{theorem}[\cite{perK}]\label{K^2+odd}
Let $K(G)$ denote the number of perfect matchings of the graph $G$. Then we have
 \[ \mathrm{per}A(G) = K(G)^2 + \sum_{H_n^*} 2^{c(H_n^*)},\]where $H_n^*$ denotes the Sachs spanning subgraph containing at least one odd cycle, and $c(H_n^*)$ denotes the number of cycles in $H_n^*$. In particular, for a bipartite graph $G$, since $G$ contains no odd cycles, we have \[ \mathrm{per} A(G) = K(G)^2. \]
\end{theorem}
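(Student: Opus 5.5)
The plan is to expand the permanent as a sum over permutations, read each contributing permutation as a spanning Sachs subgraph of $G$ together with an orientation of its cycles, and then split the spanning Sachs subgraphs according to whether they contain an odd cycle. The ones with no odd cycle will be counted by ordered pairs of perfect matchings, giving $K(G)^2$.

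\textbf{Step 1 (expansion).} By definition, $\mathrm{per}A(G)$ is the number of permutations $\sigma\in S_n$ with $a_{i\sigma(i)}=1$ for all $i$. Since $G$ has no loops, such a $\sigma$ has no fixed points. Each $2$-cycle $(i\,j)$ of $\sigma$ requires $ij\in E(G)$, and each cycle of length $\ge 3$ traces a cycle of $G$ with a chosen direction. Hence the contributing permutations correspond exactly to pairs (spanning Sachs subgraph $H$, choice of orientation of each cycle of $H$). Each cycle admits exactly two orientations, while an edge component admits one. Therefore
\[\mathrm{per}A(G)=\sum_{H}2^{c(H)},\]
where $H$ ranges over all spanning Sachs subgraphs of $G$. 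This is the $k=n$ case of Eq.~\eqref{Sachs2} up to the sign $(-1)^n$.

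\textbf{Step 2 (splitting).} Write the sum as $\Sigma_{\mathrm{even}}+\Sigma_{\mathrm{odd}}$. Here $\Sigma_{\mathrm{odd}}$ collects the $H$ containing at least one odd cycle, which is precisely $\sum_{H_n^*}2^{c(H_n^*)}$. The sum $\Sigma_{\mathrm{even}}$ collects the $H$ all of whose cycles are even. It remains to show $\Sigma_{\mathrm{even}}=K(G)^2$.

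\textbf{Step 3 (key step: double counting).} Consider the map sending an ordered pair $(M_1,M_2)$ of perfect matchings to the spanning subgraph $M_1\cup M_2$.
\begin{itemize}
\item Every vertex has degree $1$ in each $M_i$. So each component of the union is either an edge lying in $M_1\cap M_2$ or a cycle whose edges alternate between $M_1\setminus M_2$ and $M_2\setminus M_1$. Alternation forces such a cycle to be even, of length at least $4$.
\item Consequently $M_1\cup M_2$ is a spanning Sachs subgraph with only even cycles.
\item Conversely, fix such an $H$. Every edge component must lie in both $M_1$ and $M_2$. Every even cycle has exactly two perfect matchings of its vertex set, and one must be assigned to $M_1$ and the other to $M_2$, giving $2$ choices per cycle, made independently.
\item Thus exactly $2^{c(H)}$ ordered pairs map to $H$.
\end{itemize}
Summing over $H$ gives $\Sigma_{\mathrm{even}}=K(G)^2$.

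The main care point is in this step: checking that the correspondence is exactly $2^{c(H)}$-to-one. In particular, a $2$-cycle of $\sigma$ (an edge component) must be counted once in Step 1 but corresponds to a common edge of $M_1\cap M_2$ in Step 3, so both sides weight it by $1$.

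\textbf{Step 4 (bipartite case).} If $G$ is bipartite, it has no odd cycles, so $\Sigma_{\mathrm{odd}}=0$ and $\mathrm{per}A(G)=K(G)^2$.
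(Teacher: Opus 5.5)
Your proof is correct and complete. The paper gives no proof of this statement; it is quoted from the cited reference. Your argument is the standard one for this identity: you read the $k=n$ case of Eq.~\eqref{Sachs2} as $\mathrm{per}A(G)=\sum_{H}2^{c(H)}$ over spanning Sachs subgraphs, then show that $(M_1,M_2)\mapsto M_1\cup M_2$ maps onto the spanning Sachs subgraphs with only even cycles, with fibres of size exactly $2^{c(H)}$. You also handle the two delicate points correctly: edge components carry weight $1$ on both sides, and simplicity of $G$ rules out $2$-cycles in $M_1\cup M_2$.
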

\begin{corollary}\label{perA}
    Let $G$ be a graph of order $n$, and let $\pi(G;x)=\sum_{k=0}^n b_k x^{n-k}$ be the permanental polynomial of $G$. Then the following hold:
    \begin{enumerate}
        \item $b_k$ is even for every odd $k$. In particular, if $G$ has odd order, then $\mathrm{per}A(G)$ is even;
    \item If $n$ is even, then $\mathrm{per}A(G)\equiv0$ or $1\pmod{4}$.\end{enumerate}
\end{corollary}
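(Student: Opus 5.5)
Both parts follow from relating the coefficients $b_k$ to the Sachs expansion \eqref{Sachs2} and to Theorem~\ref{K^2+odd}, applied to induced subgraphs.

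\textbf{Part 1: $b_k$ is even for odd $k$.} This is the special case $\lambda=(n)$ of Corollary~\ref{odd k}, since $\pi(G;x)=\phi_{(n)}(G;x)$ and $b_k=c_{(n),k}(G)$. To keep it self-contained, one can also argue directly from \eqref{Sachs2}.
\begin{itemize}
\item If $k$ is odd, every Sachs subgraph $H_k$ on $k$ vertices contains at least one cycle, because a union of disjoint edges covers an even number of vertices.
\item Hence every term $2^{c(H_k)}$ is even, so $b_k$ is even.
\item For $k=1$ there are no Sachs subgraphs at all, so $b_1=0$.
\end{itemize}
For the ``in particular'' clause, substitute $x=0$:
\[
b_n=\pi(G;0)=\mathrm{per}(-A(G))=(-1)^n\,\mathrm{per}A(G).
\]
When $n$ is odd, $b_n$ is even by the first part, so $\mathrm{per}A(G)$ is even. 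Alternatively, Theorem~\ref{K^2+odd} gives this directly: for odd $n$ we have $K(G)=0$, and every spanning Sachs subgraph contains a cycle.

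\textbf{Part 2: $\mathrm{per}A(G)\equiv 0$ or $1\pmod 4$ for even $n$.} Start from Theorem~\ref{K^2+odd}:
\[
\mathrm{per}A(G)=K(G)^2+\sum_{H_n^*}2^{c(H_n^*)}.
\]
Since $K(G)^2\equiv 0$ or $1\pmod 4$, it suffices to show $\sum_{H_n^*}2^{c(H_n^*)}\equiv 0\pmod 4$.
\begin{itemize}
\item \emph{At least two odd cycles.} Each $H_n^*$ contains an odd cycle. Its remaining components cover an odd number of the $n$ vertices, with $n$ even, so there must be a second odd cycle. Hence $c(H_n^*)\ge 2$, and each term is divisible by $4$.
\item \emph{Caveat on the term $K(G)^2$.} This argument relies on the theorem's convention that $K(G)^2$ already accounts for all Sachs subgraphs with no odd cycles, including those with even cycles; this is what the cited theorem asserts. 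With that convention, the sum over $H_n^*$ is $0\pmod 4$, and the conclusion follows.
\end{itemize}

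\textbf{Main obstacle.} The only delicate point is trusting how Theorem~\ref{K^2+odd} splits the spanning Sachs subgraphs. As a consistency check, a spanning Sachs subgraph made of even cycles and edges is in bijection with an ordered pair of perfect matchings: each even cycle contributes a factor of $2$ choices, which matches $2^{c}$. So the count $K(G)^2$ is exactly right.
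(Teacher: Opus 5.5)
Your proof is correct and follows the paper's argument. Part 1 is the case $\lambda=(n)$ of Corollary~\ref{odd k} together with $b_n=(-1)^n\mathrm{per}A(G)$, and Part 2 applies Theorem~\ref{K^2+odd} with the parity fact that an even-order spanning Sachs subgraph containing one odd cycle must contain a second, so each $2^{c(H_n^*)}\equiv 0\pmod 4$. One wording fix in your (unnecessary) consistency check: ordered pairs of perfect matchings map onto spanning Sachs subgraphs without odd cycles $2^{c}$-to-one, not bijectively, which is exactly why their number is $\sum 2^{c}=K(G)^2$.
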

\begin{proof}
    As a special case of the immanantal polynomial, the first conclusion clearly holds for the permanental polynomial by Corollary~\ref{odd k}. In particular, if $G$ is a graph of odd order, then $b_n$ is even. That is, $(-1)^n\mathrm{per}A(G)$ is even.

    Suppose that $G$ is a graph of even order. If $G$ contains a Sachs spanning subgraph $H_n^*$ with at least one odd cycle, then $H_n^*$ must contain an even number of odd cycles, since the components of $H_n^*$, apart from odd cycles, are even cycles and edges. Therefore, we have $\mathrm{per}A(G)\equiv  K(G)^2\pmod4$ by Theorem~\ref{K^2+odd}. More precisely, $\mathrm{per}A(G)\equiv0\pmod4$ if $K(G)$ is even, and $\mathrm{per}A(G)\equiv1\pmod4$ if $K(G)$ is odd.
\end{proof}
\begin{remark}
    \textup{It follows from Corollary~\ref{perA} that $\mathrm{per}A(G)\equiv0$ or $1\pmod{4}$ when $n$ is even. For graphs $G$ and $H$ with even order, if $\pi(G;x)-\pi(H;x)\equiv c\pmod{4}$, then we have $c\equiv0$ or $\pm1\pmod4$.}
\end{remark}
\begin{remark}
    \textup{For a graph $G$ with an even number of vertices, the permanent and the determinant of its adjacency matrix $A$ exhibit different properties. For the determinant, we have $\det(A)\equiv 0$ or $1\pmod{4}$ if $n \equiv 0\pmod{4}$ and $\det(A)\equiv 0$ or $-1 \pmod{4}$ if $n \equiv 2 \pmod{4}$~\cite{detA}, which contrasts with the corresponding result for the permanent in Corollary~\ref{perA}.}
\end{remark}
\subsection{The matching polynomial}
Recall that an $r$-matching in a graph $G$ is a set of $r$ edges, no two of which have a vertex in common. The number of $r$-matchings in $G$ is denoted by $m(G,r)$. We set $m(G,0) = 1 $ and define the matchings polynomial of $ G $ by (see~\cite{matchc}, chapter 1)\begin{equation}\label{mu}
    \mu(G,x) := \sum_{r=0}^{\lfloor n/2\rfloor} (-1)^r m(G,r) x^{n-2r}.
\end{equation}
In particular, the number of $r$-matchings in the complete graph $K_n$ on $n$ vertices~\cite{matchc} is \[m(K_n, r) = \frac{n!}{r!(n - 2r)! \, 2^r}.\]
Godsil presented the relationship between the matching polynomial of $G$ and the matching polynomial of $\bar{G}$~\cite{matching}.
\begin{theorem}[\cite{matching}]\label{matchP}
Let $G$ be a graph with $n$ vertices. Then
    \[\mu(\bar{G}, x) = \sum_{r=0}^{\lfloor n/2 \rfloor} m(G,r)\mu(K_{n-2r}, x).\]
\end{theorem}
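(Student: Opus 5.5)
The plan is to prove the identity coefficient by coefficient, using inclusion--exclusion on the edges of $G$. View $\bar G$ as the spanning subgraph of $K_n$ obtained by deleting $E(G)$. The $k$-matchings of $\bar G$ are then exactly the $k$-matchings $M$ of $K_n$ with $M\cap E(G)=\emptyset$.

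\textbf{Step 1.} For a fixed $k$-matching $M$ of $K_n$ we have
\[
\sum_{F\subseteq M\cap E(G)}(-1)^{|F|}=\begin{cases}1,& M\cap E(G)=\emptyset,\\ 0,&\text{otherwise.}\end{cases}
\]
Summing this over all $k$-matchings $M$ of $K_n$ gives $m(\bar G,k)$.

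\textbf{Step 2.} Exchange the order of summation. Every subset $F$ of a matching is itself a matching, so $F$ runs over the $r$-matchings of $G$ for $r=0,1,\dots,k$. For a fixed $r$-matching $F$ of $G$, a $k$-matching of $K_n$ containing $F$ is $F$ together with a $(k-r)$-matching of the complete graph on the $n-2r$ vertices not covered by $F$. There are $m(K_{n-2r},k-r)$ of these. Hence
\[
m(\bar G,k)=\sum_{r=0}^{k}(-1)^r\, m(G,r)\, m(K_{n-2r},k-r).
\]

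\textbf{Step 3.} Substitute this into the definition \eqref{mu} of $\mu(\bar G,x)$:
\[
\mu(\bar G,x)=\sum_k(-1)^k m(\bar G,k)x^{n-2k}
=\sum_r m(G,r)\sum_{j\ge 0}(-1)^{j} m(K_{n-2r},j)\,x^{(n-2r)-2j}.
\]
Here we set $j=k-r$ and used
\[
(-1)^k(-1)^r=(-1)^{k-r},\qquad n-2k=(n-2r)-2j.
\]
The inner sum is exactly $\mu(K_{n-2r},x)$, which gives the claimed identity.

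The only delicate point is the range bookkeeping in the reindexing. The terms with $m(K_{n-2r},j)=0$, namely those with $2j>n-2r$, vanish automatically. Likewise $m(G,r)=0$ for $r>\lfloor n/2\rfloor$. So both summation ranges may be taken as $0\le r\le\lfloor n/2\rfloor$ and $0\le j\le\lfloor (n-2r)/2\rfloor$ with nothing lost. Beyond this the argument is routine.
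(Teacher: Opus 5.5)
Your proof is correct, but it cannot follow the paper's proof, because the paper gives none. It quotes the identity from Godsil and then, in Corollary~\ref{matchn}, extracts the coefficient relation $(-1)^r m(\bar G,r)=\sum_{i=0}^{r}(-1)^{r-i}m(G,i)\,m(K_{n-2i},r-i)$ by comparing coefficients of $x^{n-2r}$.

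Your Step~2 is exactly that corollary: multiply it through by $(-1)^r$. You obtain it directly by inclusion--exclusion over the subsets $F\subseteq M\cap E(G)$ of the $k$-matchings $M$ of $K_n$. Your Step~3 is the paper's coefficient comparison run in reverse.

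Since the polynomial and coefficient forms are equivalent, your argument proves the theorem and Corollary~\ref{matchn} at once and makes this part of the paper self-contained. It also shows the polynomial identity is dispensable for the paper's purposes. The theorem is only ever used through Corollary~\ref{matchn}, and only modulo $2$, in the proofs of Theorems~\ref{mod2} and~\ref{C1}, so your Step~2 alone would suffice there.

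Citing the polynomial form buys brevity and the connection to the general theory of matching polynomials. As a standalone proof of the stated identity, your argument is complete, including the range bookkeeping via $\lfloor (n-2r)/2\rfloor=\lfloor n/2\rfloor-r$.
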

Furthermore, we establish a relationship between the matching numbers of $G$ and $\bar{G}$.
\begin{corollary}\label{matchn}
    Let $G$ be a graph with $n$ vertices. Then
    \[ (-1)^r m(\bar{G},r)=\sum_{i=0}^{r} (-1)^{r-i}m(G,i)m(K_{n-2i},r-i). \]
\end{corollary}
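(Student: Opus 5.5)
Corollary~\ref{matchn} will follow by reading off coefficients in Theorem~\ref{matchP}. The plan is to write the coefficient of $x^{n-2r}$ on each side of Godsil's identity and match them.

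On the left, by definition~\eqref{mu} applied to $\bar{G}$, the coefficient of $x^{n-2r}$ in $\mu(\bar G,x)$ is $(-1)^r m(\bar G,r)$.

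On the right, each summand $m(G,i)\mu(K_{n-2i},x)$ expands by~\eqref{mu} applied to $K_{n-2i}$:
\[
\mu(K_{n-2i},x)=\sum_{j=0}^{\lfloor (n-2i)/2\rfloor}(-1)^j m(K_{n-2i},j)\,x^{n-2i-2j}.
\]
The monomial $x^{n-2r}$ arises exactly when $i+j=r$, that is, $j=r-i$ with $0\le i\le r$. The summation range is automatically correct: for $i\le r\le\lfloor n/2\rfloor$ we have $r-i\le\lfloor (n-2i)/2\rfloor$. Terms with $i>r$ cannot contribute, since $j\ge 0$ would force $n-2i-2j<n-2r$. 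Collecting, the coefficient of $x^{n-2r}$ on the right is
\[
\sum_{i=0}^{r}(-1)^{r-i}m(G,i)\,m(K_{n-2i},r-i).
\]

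Equating the two coefficients gives the claimed identity for $0\le r\le\lfloor n/2\rfloor$, which is the full range in which the statement is meaningful.

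There is no real obstacle here. The only point needing care is the index bookkeeping: checking that the upper limit of the inner sum never truncates a term with $j=r-i$, and that only $i\le r$ contributes. The parity of the exponents matches automatically, because all exponents appearing on both sides are congruent to $n$ modulo $2$.
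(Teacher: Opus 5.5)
Your proposal is correct and follows the same route as the paper, which also expands both sides of Godsil's identity (Theorem~\ref{matchP}) via Eq.~\eqref{mu} and compares the coefficients of $x^{n-2r}$. Your checks that the inner summation limit never cuts off the term $j=r-i$, and that terms with $i>r$ cannot contribute, fill in index bookkeeping the paper leaves implicit.
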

\begin{proof}
It follows from Eq.~\eqref{mu} and Theorem~\ref{matchP} that \[\sum_{r=0}^{\lfloor n/2\rfloor} (-1)^r m(\bar{G},r) x^{n-2r}=\sum_{r=0}^{\lfloor n/2 \rfloor} m(G,r)\sum_{i=0}^{\lfloor n/2\rfloor-r} (-1)^i m(K_{n-2r},i) x^{n-2r-2i}.\]By comparing the coefficients of $x^{n-2r}$ on both sides of the equation, we completes the proof.
\end{proof}
\section{Nearly cospectral graphs with respect to the permanent}\label{S3}
In this section, we present proofs of Theorems~\ref{det per}, \ref{per4}, \ref{B1}, \ref{B2}, and \ref{B3}. Theorems~\ref{per4}, \ref{B1}, \ref{B2}, and \ref{B3} provide partial answers to the (generalized) nearly cospectral graphs problem modulo 4 with respect to the permanent, and Theorem~\ref{det per} is the key result used in their proofs.

Firstly, we show that $\phi(G;x)\pmod4$ and $\pi(G;x)\pmod{4}$ can be derived from each other.
\begin{proof}[Proof of Theorem~\ref{det per}]
  For each Sachs subgraph $H_k$ with $k$ vertices of $G$, the contributions of $H_k$ to $a_k$ and $b_k$ are $(-1)^{p(H_k)} 2^{c(H_k)}$ and $(-1)^k 2^{c(H_k)}$, respectively.

If $c(H_k)\geq2$, then \[(-1)^{p(H_k)} 2^{c(H_k)}\equiv(-1)^{\lfloor k/2\rfloor}(-1)^k 2^{c(H_k)}\equiv0\pmod4.\]
If $c(H_k)=1$, then \[(-1)^{p(H_k)} 2^{c(H_k)}\equiv(-1)^{\lfloor k/2\rfloor}(-1)^k 2^{c(H_k)}\equiv2\pmod4.\]
If $c(H_k)=0$, then all components in $H_k$ are edges. Hence, we get $k=2p(H_k)$ is even. Therefore, \[(-1)^{p(H_k)} 2^{c(H_k)}\equiv(-1)^{\lfloor k/2\rfloor}(-1)^k 2^{c(H_k)}\pmod4.\]
Therefore, for each $H_k$, we have \[(-1)^{p(H_k)} 2^{c(H_k)}\equiv(-1)^{\lfloor k/2\rfloor}(-1)^k 2^{c(H_k)}\pmod4.\]
It follows from Eq.~\eqref{Sachs1} and Eq.~\eqref{Sachs2} that we obtain \begin{equation}\label{(-1)^k/2}
    a_k\equiv(-1)^{\lfloor k/2\rfloor}b_k\pmod4.
\end{equation}
Therefore, $\phi(G;x)\pmod4$ and $\pi(G;x)\pmod{4}$ can be derived from each other.
\end{proof}
Naturally, by means of Theorem~\ref{det per}, the results of Theorems~\ref{mod4}, \ref{A1}, \ref{A2}, and \ref{A3} can be extended to Theorems~\ref{per4}, \ref{B1}, \ref{B2}, and \ref{B3}.

\begin{proof}[Proof of Theorem~\ref{per4}]
    Let $G$ and $H$ be graphs that satisfy \[\pi(G;x)\equiv\pi(H;x)\pmod4.\]
    It follows from Eq.~\eqref{(-1)^k/2} that \[\phi(G;x)\equiv\phi(H;x)\pmod4.\]
    According to Theorem~\ref{mod4}, we have \[\phi(\bar{G};x)\equiv\phi(\bar{H};x)\pmod4.\]
    Applying Eq.~\eqref{(-1)^k/2} again, we obtain  \[\pi(\bar{G};x)\equiv\pi(\bar{H};x)\pmod4.\]
    This completes the proof.
\end{proof}
\begin{proof}[Proof of Theorem~\ref{B1}]
Let $G$ and $H$ be two graphs of odd order. Suppose that \[\pi(G;x)-\pi(H;x)\equiv c\pmod{4},\] where $c$ is a constant. It follows from Eq.~\eqref{(-1)^k/2} that \[\phi(G;x)-\phi(H;x)\equiv (-1)^{\lfloor n/2\rfloor} c\pmod{4}.\]
According to Theorem~\ref{A1}, we have \[\phi(\bar{G};x)-\phi(\bar{H};x)\equiv (-1)^{\lfloor n/2\rfloor} c\pmod{4}.\] Applying Eq.~\eqref{(-1)^k/2} once more, we get\[\pi(\bar{G};x)-\pi(\bar{H};x)\equiv c\pmod{4}.\]
This completes the proof.
\end{proof}
\begin{proof}[Proof of Theorem~\ref{B2}]
Let $G$ and $H$ be two graphs of even order. Suppose that \[\pi(G;x)-\pi(H;x)\equiv c\pmod{4},\] where $c$ is a constant and $c\not\equiv0\pmod4$. It follows from Eq.~\eqref{(-1)^k/2} that \[\phi(G;x)-\phi(H;x)\equiv (-1)^{\lfloor n/2\rfloor} c\pmod{4}.\]
According to Theorem~\ref{A2}, we have $\phi(\bar{G};x)-\phi(\bar{H};x)\pmod{4}$ is a polynomial of degree $\frac{n}{2}-1$. Applying Eq.~\eqref{(-1)^k/2} once more, we get $\pi(\bar{G};x)-\pi(\bar{H};x)\pmod{4}$ is a polynomial of degree $\frac{n}{2}-1$.
This completes the proof.
\end{proof}
\begin{proof}[Proof of Theorem~\ref{B3}]
 From Theorem~\ref{B1}, the conclusion clearly holds when $n$ is odd.

Suppose that $n$ is even. If $c\equiv0\pmod{4}$, it follows from Theorem~\ref{per4} that $d\equiv0\pmod{4}$. According to Theorem~\ref{B2}, if $c\not\equiv0\pmod{4}$, then $\pi(\bar{G};x)-\pi(\bar{H};x)\pmod{4}$ must be a polynomial of degree $\frac{n}{2}-1$. Moreover, when $n\geq4$, $\pi(\bar{G};x)-\pi(\bar{H};x)\pmod{4}$ is a non-constant polynomial. Therefore, when $n\geq4$, if $\pi(G;x)-\pi(H;x)\equiv c\pmod{4}$ and $\pi(\bar{G};x)-\pi(\bar{H};x)\equiv d\pmod{4}$, then we must have $c\equiv d\equiv0\pmod{4}$. This completes the proof.
\end{proof}
\section{Nearly immanantal cospectral graphs with respect to $\chi_\lambda$}\label{S4}
In this section, we present proofs of Theorem~\ref{mod2} and Theorem~\ref{C1}, which provide partial answers to the (generalized) nearly immanantal cospectral graphs problem with respect to $\chi_\lambda$ in $\mathbb{F}_2[x]$.
\begin{proof}[Proof of Theorem~\ref{mod2}]
    Let $ G $ be a graph on $ n $ vertices and $ A(G) $ be its adjacency matrix. Write the immanantal polynomial of $ G $ as \[
\phi_{\lambda}(G;x) = \sum_{k=0}^{n}c_{\lambda,k}(G)x^{n-k}.
\] It follows from Corollary~\ref{odd k} that \[\phi_{\lambda}(G;x) \equiv \sum_{j=0}^{\lfloor n/2 \rfloor} c_{\lambda,2j}(G) x^{n-2j}\pmod2.\]

It follows from Theorem~\ref{Sachscharacter} that for $j\geq1$,\[c_{\lambda,2j}(G)=\sum_{B:|B|=2j, B \subseteq V(G)} \sum_C \chi_{\lambda}(C) 2^{c(T_C(B))}\equiv \chi_{\lambda}((2^j))m(G,j)\pmod2,\] where $m(G,j)$ is the number of $j$-matchings in $G$. In fact, only those Sachs subgraphs of $G$ on $k$ vertices whose components are all edges affect $c_{\lambda,2j}(G)\pmod2$. That is, only the conjugacy classes of type $(2^j)$ in $ (S_n)_B $ contribute to $c_{\lambda,2j}(G)\pmod2$. Note that $c_{\lambda,0}(G)=\chi_{\lambda}((1^n))$ and $m(G,0)=1$, we identify the two conjugacy classes $(2^0)$ and $(1^n)$ in $S_n$ as equivalent. Therefore, for $j\geq0$, we have \[c_{\lambda,2j}(G)\equiv \chi_{\lambda}((2^j))m(G,j)\pmod2.\]

By Corollary~\ref{matchn}, we have \begin{align}\label{G barG mod2}
     c_{\lambda,2j}(\bar{G})&\equiv \chi_{\lambda}((2^j))m(\bar{G},j)\nonumber\\&\equiv\chi_{\lambda}((2^j))\left\lbrack\sum_{i=0}^{j}m(G,i)m(K_{n-2i},j-i)\right\rbrack\nonumber\\&\equiv\chi_{\lambda}((2^j))\left\lbrack\sum_{i=0}^{j} \frac{c_{\lambda,2i}(G)}{\chi_{\lambda}((2^i))}m(K_{n-2i},j-i)\right\rbrack.\pmod2
\end{align}
Therefore, given $\phi_{\lambda}(G;x)\pmod2$, we can obtain $\phi_{\lambda}(\bar{G};x)\pmod2$.
\end{proof}
Suppose that $\phi_\lambda(G;x)-\phi_\lambda(H;x)\equiv c\pmod{2}$, where $c$ is a constant. In particular, the case $c\equiv0\pmod2$ corresponds exactly to Theorem~\ref{mod2}. We now prove the general case of $c$.
\begin{proof}[Proof of Theorem~\ref{C1}]
    Suppose that $\phi_\lambda(G;x)-\phi_\lambda(H;x)\equiv c\pmod{2}$, where $c$ is a constant. If $n$ is odd, it follows from Corollary~\ref{odd k} that $c\equiv0\pmod2$. By Theorem~\ref{mod2}, we have $\phi_\lambda(\bar{G};x)-\phi_\lambda(\bar{H};x)\equiv c\pmod{2}$.

    If $n$ is even, note that \[c_{\lambda,2j}(G) \equiv c_{\lambda,2j}(H) \pmod{2}\quad\text{for}\quad j \in \{0, 1, 2, \dots, \frac{n}{2} - 1\}\]and \[c_{\lambda,n}(G) - c_{\lambda,n}(H) \equiv c \pmod{2}.\] It follows from Eq.~\eqref{G barG mod2} that \[c_{\lambda,2j}(\bar{G})\equiv c_{\lambda,2j}(\bar{H})\quad\text{for}\quad j\in\lbrace0,1,2,\dots,\frac{n}{2}-1\rbrace\] and \[c_{\lambda,n}(\bar{G})-c_{\lambda,n}(\bar{H})\equiv c_{\lambda,n}(G)-c_{\lambda,n}(H)\equiv c\pmod2.\]This completes the proof.
\end{proof}
\section{Examples}\label{ex}
In this section, we demonstrate that several results previously established for the determinant (Theorems~\ref{mod4}, \ref{A1}, \ref{A2}, and \ref{A3}) and the permanent (Theorems~\ref{per4}, \ref{B1}, \ref{B2}, and \ref{B3}) no longer hold for a general character, by examining the second immanant.

The \emph{second immanant} of an $n\times n$ matrix $A$ is the immanant corresponding to the irreducible character $(2,1^{n-2})$ of $S_n$. Let $G$ be a graph of order $n$ with adjacency matrix $A$. The \emph{second immanantal polynomial} $ \phi_{(2,1^{n-2})}(G;x)$ of $G$ associated with the character $(2,1^{n-2})$ is denoted by
\[
\phi_{(2,1^{n-2})}(G;x) = d_{(2,1^{n-2})}(xI - A).
\]Let $X=(x_{ij})$ be an $n\times n$ matrix $(k\geq2)$, and let $X_i$ be the submatrix of $X$ obtained by deleting the $i$-th row and the $i$-th column. Merris~\cite{second} gave that\begin{equation}\label{2}
  d_{(2,1^{n-2})}(X) = \sum_{i=1}^n x_{ii} \det X_i-\det X.
\end{equation}
Therefore, we can compute $\phi_{(2,1^{n-2})}(G;x)$ by the characteristic polynomial.

We now give counterexamples for graphs with an odd number of vertices and for graphs with an even number of vertices, respectively.
\begin{example}\label{ex1}{\textup {Let $n=9$. $G_1,G_2,G_3$, and $G_4$ are graphs for which the first $n$ coefficients of the second immanantal polynomials are congruent modulo 4; see Figure~\ref{9}. Their second immanantal polynomials are \[\phi_{(2,1^{n-2})}(G_1;x)=8x^9 - 114x^7 - 120x^6 + 256x^5 + 348x^4 - 26x^3 - 82x^2 - 8\equiv2 x^7 + 2 x^3 + 2 x^2\,(\mathrm{mod}\,4),\]
\[\phi_{(2,1^{n-2})}(G_2;x)=8x^9 - 102x^7 - 80x^6 + 228x^5 + 168x^4 - 126x^3 - 38x^2 + 2\equiv2 x^7 + 2 x^3 + 2 x^2+2\,(\mathrm{mod}\,4),\]
\[\phi_{(2,1^{n-2})}(G_3;x)=8x^9 - 114x^7 - 100x^6 + 216x^5 + 156x^4 - 78x^3 - 14x^2\equiv2 x^7 + 2 x^3 + 2 x^2\,(\mathrm{mod}\,4),\]
\[\phi_{(2,1^{n-2})}(G_4;x)=8x^9 - 102x^7 - 80x^6 + 248x^5 + 228x^4 - 98x^3 - 62x^2 - 12\equiv2 x^7 + 2 x^3 + 2 x^2\,(\mathrm{mod}\,4).\]
The second immanantal polynomials of their complements are \[\phi_{(2,1^{n-2})}(\bar{G_1};x)=8x^9 - 102x^7 - 60x^6 + 196x^5 + 72x^4 - 100x^3 - 8x^2 + 2\equiv2 x^7 + 2\,(\mathrm{mod}\,4),\]
\[\phi_{(2,1^{n-2})}(\bar{G_2};x)=8x^9 - 114x^7 - 100x^6 + 248x^5 + 180x^4 - 160x^3 - 38x^2 + 8\equiv2 x^7 + 2 x^2\,(\mathrm{mod}\,4),\]
\[\phi_{(2,1^{n-2})}(\bar{G_3};x)=8x^9 - 102x^7 - 80x^6 + 236x^5 + 210x^4 - 112x^3 - 74x^2 - 18\equiv2 x^7+ 2 x^4+ 2 x^2 +2\,(\mathrm{mod}\,4),\]
\[\phi_{(2,1^{n-2})}(\bar{G_4};x)=8x^9 - 114x^7 - 90x^6 + 228x^5 + 132x^4 - 120x^3 - 18x^2\equiv2 x^7 + 2 x^6 + 2 x^2\,(\mathrm{mod}\,4).\]}}
\end{example}

\begin{example}\label{ex2}{\textup {Let $n=8$. $G_1,G_2$ are graphs for which the first $n$ coefficients of the second immanantal polynomials are congruent modulo 4; see Figure~\ref{81}. Their second immanantal polynomials are \[\phi_{(2,1^{n-2})}(G_1;x)=7x^8 - 65x^6 - 56x^5 + 48x^4 + 24x^3 - 8x^2\equiv3 x^8 + 3 x^6\,(\mathrm{mod}\,4),\]
\[\phi_{(2,1^{n-2})}(G_2;x)=7x^8 - 65x^6 - 56x^5 + 60x^4 + 44x^3 - 8x^2 - 1\equiv3 x^8 + 3 x^6+3\,(\mathrm{mod}\,4).\]
The second immanantal polynomials of their complements are \[\phi_{(2,1^{n-2})}(\bar{G_1};x)=7x^8 - 75x^6 - 104x^5 + 9x^4 + 56x^3 + 11x^2\equiv3 x^8 + x^6 + x^4 + 3 x^2\,(\mathrm{mod}\,4),\]
\[\phi_{(2,1^{n-2})}(\bar{G_2};x)=7x^8 - 75x^6 - 88x^5 + 45x^4 + 64x^3 - x^2 - 1\equiv3 x^8 + x^6 + x^4 + 3 x^2+3\,(\mathrm{mod}\,4).\]
Therefore, we have \[\phi_{(2,1^{n-2})}(G_1;x)-\phi_{(2,1^{n-2})}(G_2;x)\equiv1\pmod4\] and \[\phi_{(2,1^{n-2})}(\bar{G_1};x)-\phi_{(2,1^{n-2})}(\bar{G_2};x)\equiv1\pmod4.\]}}

{\textup{Let $n=8$. $G_3,G_4$ are graphs for which the first $n$ coefficients of the second immanantal polynomials are congruent modulo 4; see Figure~\ref{82}. Their second immanantal polynomials are \[\phi_{(2,1^{n-2})}(G_3;x)=7x^8 - 80x^6 - 64x^5 + 84x^4 + 32x^3 - 4x^2\equiv3 x^8\,(\mathrm{mod}\,4),\]
\[\phi_{(2,1^{n-2})}(G_4;x)=7x^8 - 60x^6 - 24x^5 + 96x^4 + 20x^3 - 28x^2 - 1\equiv3 x^8 +3\,(\mathrm{mod}\,4).\]
The second immanantal polynomials of their complements are \[\phi_{(2,1^{n-2})}(\bar{G_3};x)=7x^8 - 60x^6 - 48x^5 + 66x^4 + 64x^3 + 3\equiv3 x^8+2 x^4 +3 \,(\mathrm{mod}\,4),\]
\[\phi_{(2,1^{n-2})}(\bar{G_4};x)=7x^8 - 80x^6 - 72x^5 + 102x^4 + 72x^3 - 26x^2 - 8\equiv3 x^8 + 2 x^4+2 x^2\,(\mathrm{mod}\,4).\]
Therefore, we have \[\phi_{(2,1^{n-2})}(G_3;x)-\phi_{(2,1^{n-2})}(G_4;x)\equiv1\pmod4\] and \[\phi_{(2,1^{n-2})}(\bar{G_3};x)-\phi_{(2,1^{n-2})}(\bar{G_4};x)\equiv2x^2+1\pmod4.\]
}}
\end{example}
\begin{figure}[htbp]
\centering
\includegraphics[width=10cm]{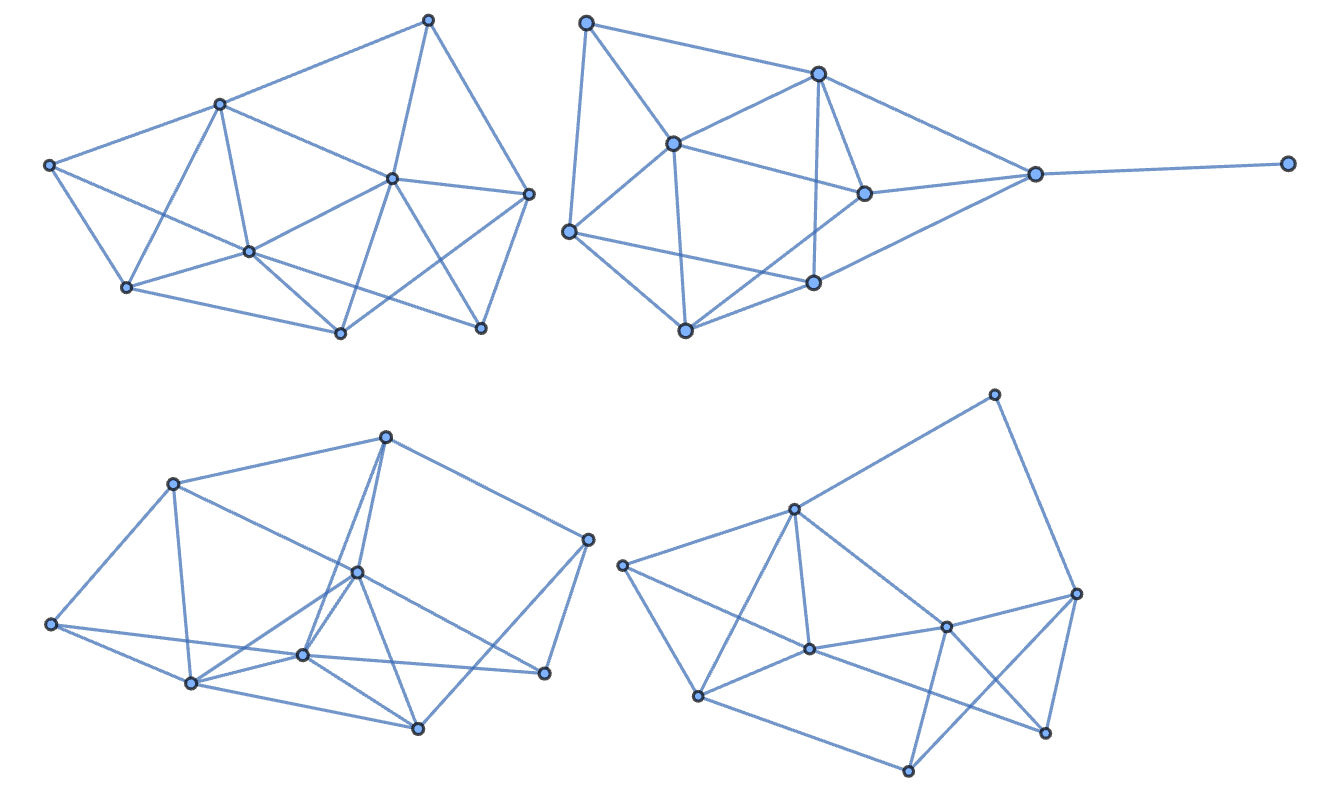}
\caption{$G_1,G_2,G_3,G_4$ in Example~\ref{ex1} (from left to right and top to bottom)}
\label{9}
\end{figure}
\begin{figure}[htbp]
\centering
\includegraphics[width=10cm]{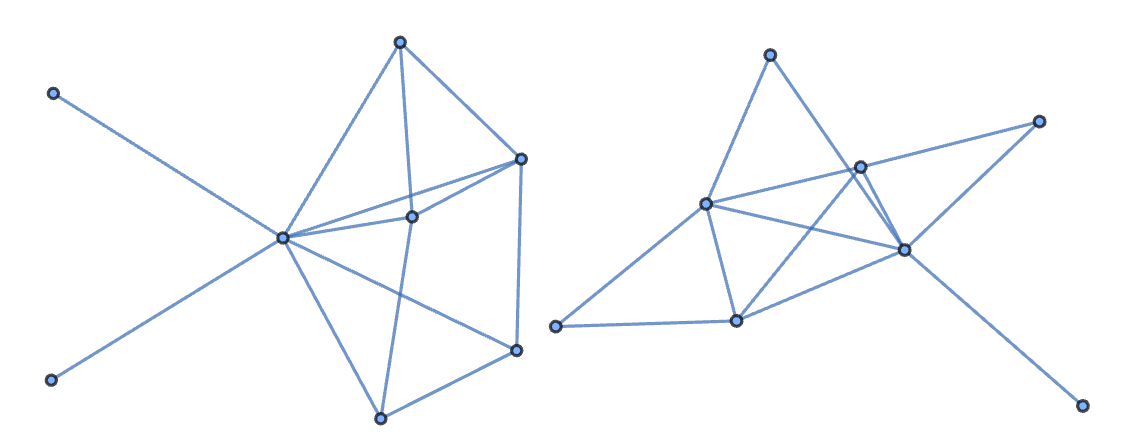}
\caption{$G_1,G_2$ in Example~\ref{ex2} (from left to right)}
\label{81}
\end{figure}
\begin{figure}[htbp]
\centering
\includegraphics[width=10cm]{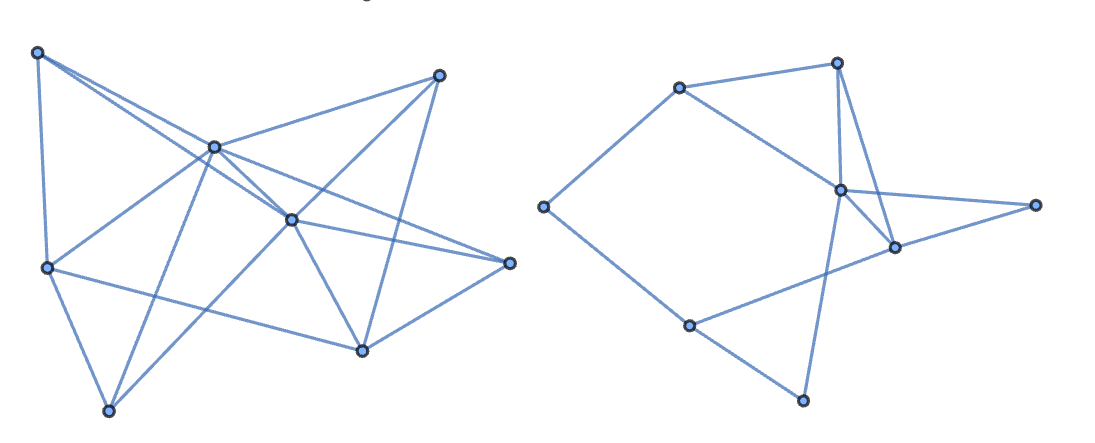}
\caption{$G_3,G_4$ in Example~\ref{ex2} (from left to right)}
\label{82}
\end{figure}

\section*{Acknowledgments}
The research of the third author is supported by National Key Research and Development Program of China 2023YFA1010203, National Natural Science Foundation of China (Grant No.\,12371357), and the fourth author is supported by Fundamental Research Funds for the Central Universities (Grant No. 531118010622),
National Natural Science Foundation of China (Grant No.\,1240011979) and Hunan
Provincial Natural Science Foundation of China (Grant No.\, 2024JJ6120).

\bibliographystyle{ieeetr}

\end{document}